\numberwithin{equation}{section}
\newtheorem{thm}{Theorem}[section]
\newtheorem{lem}[thm]{Lemma}
\newtheorem{rem}[thm]{Remark}
\newcommand{\RR}{\mathbb{R}}
\newcommand{\Erfc}{\mathop{\mathrm{Erfc}}}
\newcommand{\link}{\mathop{\circ\kern-.35em -}}
\newcommand{\ol}{\overline}
\newcommand{\pa}{\partial}
\newcommand{\lan}{\langle}
\newcommand{\ran}{\rangle}
\newcommand{\tr}{\mathop{\mathrm{tr}}}
\newcommand{\na}{\nabla}
\newcommand{\be}{\beta}
\newcommand{\ga}{\gamma}  
\newcommand{\Ga}{\Gamma}
\newcommand{\de}{\delta}
\newcommand{\De}{\Delta}
\newcommand{\ve}{\varepsilon}
\newcommand{\fhi}{\varphi} 
\newcommand{\la}{\lambda}
\newcommand{\La}{\Lambda}    
\newcommand{\ka}{\kappa}
\newcommand{\si}{\sigma}
\newcommand{\te}{\theta}
\newcommand{\zi}{\zeta}
\newcommand{\om}{\omega}
\newcommand{\Om}{\Omega}
\newcommand{\cA}{\mathcal{A}}
\newcommand{\cL}{\mathcal L}
\newcommand{\cM}{\mathcal M}
\newcommand{\cS}{\mathcal S}
\title[Asymptotics 
for Pucci operators]{Small diffusion and short-time asymptotics \\
for Pucci operators}
\author{Diego Berti}
\address{CSIC --- Consejo Superior de Investigaciones Cient\'ificas, ICMAT --- Instituto de Ciencias Matem\'aticas, Calle Nicol\'as Cabrera 13-15,
Campus de Cantoblanco (UAM)
28049 Madrid, Spain}
    \email{diego.berti@icmat.es}
\urladdr{https://www.icmat.es/diego.berti}
\author[R. Magnanini]{Rolando Magnanini}
\address{Dipartimento di Matematica ``U. Dini'', Universit\`a di Firenze, viale Morgagni 67/A, 50134 Firenze, Italy}
\email{magnanini@unifi.it}
\urladdr{http://web.math.unifi.it/users/magnanin}
 \date{}
\begin{document}
 
 \begin{abstract}
This paper presents asymptotic formulas in the case of the following two problems for the {\it Pucci\rq{}s extremal operators} $\cM^\pm$. It is considered the solution $u^\ve(x)$ of $-\ve^2 \cM^\pm\left(\na ^2 u^\ve\right)+u^\ve=0$ in $\Om$ such that $u^\ve=1$ on $\Ga$. Here, $\Om\subset \RR^N$ is a domain (not necessarily bounded) and $\Ga$ is its boundary. It is also considered $v(x,t)$ the solution of $v_t - \cM^\pm\left(\na^2 v\right)=0$ in $\Om\times (0,\infty)$, $v=1$ on $\Ga\times(0,\infty)$ and $v=0$ on $\Om\times \{0\}$.  In the spirit of their previous works \cite{BM-AA, BM-JMPA}, the authors establish the profiles as $\ve$ or $t\to 0^+$ of the values of $u^\ve(x)$ and $v(x,t)$ as well as of those of their $q$-means on balls touching $\Ga$. The results represent a further step in the extensions of those obtained by Varadhan and by Magnanini-Sakaguchi in the linear regime.
 \end{abstract}
 
 \keywords{Pucci operators, asymptotic analysis, $q$-means}
 \subjclass[2010]{Primary 35K55, 35J60; Secondary 35K20, 35J25, 35B40}
 
  \maketitle
 
 \section{Introduction}

Varadhan's formulas are now more than fifty years old. Their original motivation has to do with the asymptotic behavior of probabilities. The two important reference situations concern an elliptic boundary value problem,
$$
\ve^2 \cL[u^\ve]=u^\ve \ \mbox{ in } \ \Om, \quad u^\ve=1 \ \mbox{ on } \ \Ga,
$$
and a parabolic initial-boundary value problem,  
$$
v_t- \cL[v]=0 \ \mbox{ in } \ \Om\times (0,\infty), \quad v=0 \ \mbox{ on } \ \Om\times \{ 0\}, \quad v=1 \ \mbox{ on } \ \Ga\times (0,\infty).
$$
Here, $\Om$ is a domain in $\RR^N$, $N\ge 2$, not necessarily bounded and with sufficiently regular boundary $\Ga$, and $\cL$ is an elliptic operator. In both cases, the maximum principle gives that the values of $u^\ve$ and $v$ in $\Om$ belong to the interval $(0,1)$, thus giving grounds for a probabilistic interpretation for them.
\par
In his seminal paper \cite{Va}, Varadhan considers a uniformly elliptic linear operator $\cL$ with H\"older continuous coefficients and proves the two formulas:
$$
\lim_{\ve\to 0^+} \ve \log u^\ve(x)=-d_\cL(x,\Ga), \ x\in\ol{\Om},
$$
and
$$
\lim_{t\to 0^+} 4t \log v(x,t)=-d_\cL(x,\Ga)^2, \ x\in\ol{\Om}.
$$
Here, we denote by $d_\cL(x,\Ga)$ the shortest distance, induced by a Riemannian metric derived from the 
coefficients of $\cL$, to points on the boundary $\Ga$
from $x$. 
These two asymptotic formulas express in a precise manner that the two families of solutions present a boundary layer with exponential profile when $\ve$ and $t\to 0^+$.
\par
Varadhan's formulas have been generalized by the second author and some of his co-authors to various nonlinear regimes. The formulas have to be modified by possibly replacing the logarithm by a more suitable profile that depends on the nonlinearity present in the operator $\cL$. In \cite{MS-PRSE} and \cite{Sa}, the case of the $p$-Laplace operator is considered; \cite{MS-AIHP} deals with general fast diffusion non-degenerate operators; \cite{BM-JMPA} and \cite{BM-AA} have to do with the \textit{game theoretic $p$-laplacian}.

Besides their importance in probability theory, Varadhan-type formulas find applications to the study of \textit{isothermic or time invariant level surfaces}. These are surfaces contained in $\Om$ that are spatial level surfaces for $v(\cdot,t)$ for each given time $t$ (invariant surfaces for the family of solutions $u^\ve$
can also be defined similarly). 
One remarkable property of this kind of invariant surfaces is that they are \textit{parallel} (in the relevant metric) to the boundary $\Ga$, that is the points on them have the same distance to $\Ga$. This fact clearly descends from Varadhan-type formulas, since the left-hand sides (and hence the right-hand sides) of them by definition do not depend on the position of the point on the relevant invariant surface. The fact that $d_{\cL}(x,\Ga)$ and the solutions $u^\ve$ or $v(\cdot,t)$ share the same level surface has been used in \cite{CMS-JEMS}, \cite{MS-AIHP}, \cite{MS-MMAS}, together with the method of moving planes,  to show that compact invariant surfaces are spheres.
\par
Another significant consequence of Varadhan-type formulas, that also entails invariant surfaces (and other stationary objects, such as stationary hot spots), are formulas that even more associate the behavior of solutions for small values of the parameters $\ve$ and $t$ to the geometry of the domain. The first instances of these formulas were given in \cite{MS-AM} and \cite{MS-AN} by the second author of this note and S. Sakaguchi. In this introduction, we only present them for the case of the functions $u^\ve$, when $\cL$ is the Laplace operator. In \cite{MS-AM}, it is proved:
\begin{equation}
\label{MS-formula}
\lim_{\ve\to 0^+}\left(\frac{R}{\ve}\right)^\frac{N+1}{2}\int_{B_R(x)} u^\ve(y)\,dy=c_N\,\Pi_0(z_x)^{-\frac12}.
\end{equation}
Here, $c_N$ is a numerical constant, $B_R(x)$ is a ball contained in $\Om$ and such that $\ol{B_R(x)}\cap\left(\RR^N\setminus\Om\right)=\{z_x\}$, and $\Pi_m:\Ga\to\RR$ is defined by
$$
\Pi_m=\begin{cases}
\prod\limits_{j=m+1}^{N-1} (1-R\,\ka_j) \ &\mbox{ for } \ m=0, 1, \dots, N-2, \\
1 \ &\mbox{ for } \ m=N-1,
\end{cases}
$$
where $\ka_j$, $j=1,\dots, N-1$, are the principal curvatures of $\Ga$ (at points in $\Ga$). Of course, $\Ga$ is assumed to be at least of class $C^2$; $\Ga$ may also be non-compact. 
\par
When the set $\ol{B_R(x)}\cap\left(\RR^N\setminus\Om\right)$ is larger, a more accurate formula in \cite{MS-AN} gives:
\begin{equation}
\label{MS-formula-2}
\lim_{\ve\to 0^+}  \left(\frac{R}{\ve}\right)^\frac{N-1-m}{2}
\int_{\pa B_R(x)} \fhi(y)\, u^\ve(y)\,dS_y=
c_{N,m} \int_M \fhi(y)\,
\Pi_m(y)^{-\frac12} d M_y.
\end{equation}
Here, $c_{N,m}$ is a numerical constant, $M$ is a connected component of $\pa B_R(x)\cap\Ga$ and is an $m$-dimensional submanifold of $\pa B_R(x)$ $(0\le m\le N-1)$ with possibly non-empty boundary,   
$\fhi$ is any continuous function whose support does not intersect the closure of $\pa B_R(x)\cap(\Ga\setminus M)$, and
$dM_y$ is the volume element on the submanifold $M$. When $m=0$, $M$ and $dM_y$ are regarded as a point and the Dirac measure at that point.
\par
Formulas like \eqref{MS-formula} and \eqref{MS-formula-2} and their parabolic counterparts have been used to show that, under sufficient assumptions, non-compact invariant surfaces have planar or cylindrical symmetry (see \cite{MS-IUMJ}, \cite{MS-JDE}, \cite{MPrS}, \cite{MPeS}), or that certain convex polygons are invariant under the action of some specific groups of rotations if a stationary hot spot is present (see \cite{MS-AN}, \cite{MS-JAM}). 
\par
The purpose of this paper is to investigate on similar asymptotic formulas for the so-called {\it Pucci's extremal operators} (see \cite{Pu}). These are fully nonlinear operators that can be defined for every $X$ in the space $\cS^N$ of $N\times N$ symmetric matrices by the formulas
\begin{equation}
\label{def-Pucci}
\cM^-(X) = \La \sum_{\la_i<0} \la_i + \la \sum_{\la_i>0} \la_i \quad \mbox{ and } \quad
\cM^+(X) = \la \sum_{\la_i<0} \la_i+ \La \sum_{\la_i >0} \la_i,
\end{equation}
being $\la_i=\la_i(X)$, $i=1, \dots, N$ the eigenvalues of $X$.
Here, $\la$ and $\La$ are given numbers such that $0<\la\le\La$. 
We shall thus consider the respective solutions $u^\ve_\pm$ and $v^\pm$ of the problems
\begin{equation}
\label{elliptic-Pucci}
-\ve^2 \cM^\pm(\na^2 u)+u=0  \ \mbox{ in } \ \Om, \quad u=1 \ \mbox{ on } \ \Ga,
\end{equation}
and
\begin{equation}
\label{parabolic-Pucci}
v_t- \cM^\pm(\na^2 v)=0 \ \mbox{ in } \ \Om\times (0,\infty), \ \ v=0 \ \mbox{ on } \ \Om\times \{ 0\}, \ \ v=1 \ \mbox{ on } \ \Ga\times (0,\infty),
\end{equation}
and study their behavior as $\ve$ or $t\to 0^+$. Here, we mean that the differential equations in \eqref{elliptic-Pucci} and \eqref{parabolic-Pucci} are satisfied according to the theory of viscosity solutions (see \cite{CIL}). Also, we specify that if $\Om$ is unbounded, we consider only the bounded solution of \eqref{elliptic-Pucci} or \eqref{parabolic-Pucci}.
\par
Pucci's extremal operators emerge in the study of stochastic control in the case in which
the diffusion coefficient is a control variable (\cite{BL-82}, \cite{Li-83a}, \cite{Li-83b}, \cite{Li-8182}). They have also been used to provide a natural definition of uniform ellipticity for fully nonlinear operators in the theory of viscosity solutions. In fact, a fully nonlinear operator $F:\Om\times\RR\times\RR^N\times\cS^N\to\RR$ is said to be uniformly elliptic if 
$$
\cM^-(X-Y)\le F(x,s,\xi, Y)-F(x,s,\xi, X)\le \cM^+(X-Y),
$$
for any $(x,s,\xi)\in \Om\times\RR\times\RR^N$ and $X, Y\in\cS^N$ (see \cite{CIL}, \cite{Ko}).
\par
Despite their full nonlinearity, $\cM^\pm$ share some useful features with the already mentioned game-theoretic $p$-laplacian $\De_p^G$, that is instead quasi-linear, since its action on a given function $u$ can be formally defined by 
$$
p\,\De_p^G u=\De u+(p-2)\,\frac{\lan\na^2 u\na u, \na u\ran}{|\na u|^2}
$$
(notice that for $p=2$, $\De_p^G$ coincides with $\De/2$). Indeed, besides being uniformly elliptic, $\cM^\pm$ and $\De_p^G$ are both positively $1$-homogeneous and rotation invariant but, more importantly, if $\Om$ is either a half-space, a ball, or the exterior of a ball, the solutions of  \eqref{elliptic-Pucci} and \eqref{parabolic-Pucci} can be retrieved by some relevant changes from those obtained in \cite{BM-AA} and \cite{BM-JMPA}  with $\De_p^G$ in place of $\cM^\pm$. 
\par
With these remarks in mind, we now present the main results in this paper. In what follows, given a (positive strictly increasing) modulus of continuity $\om$, we say that an open set $\Om$ is of class $C^{0,\om}$ if its boundary $\Ga$ is locally the graph of a continuous function with modulus $\om$.
Associated with $\om$, we will consider the function $\psi_\om:[0,\infty)\to [0,\infty)$ defined at $\si\geq 0$ as the distance of the point $(0,\si)$ to the graph of $\om$ (for details see \eqref{psi function}).
Also, we shall denote for short by $d_\Ga(x)$ the shortest Euclidean distance from $x$ to points on the boundary $\Ga$. 

\begin{thm}[Small diffusion asymptotics]
\label{th:elliptic-Pucci}
Let $\Om$ be a bounded open set and let $u^\ve_\pm$ be the respective solutions of \eqref{elliptic-Pucci}.
The following claims hold true.
\par
\begin{enumerate}[(i)]
\item
If $\Ga = \pa \left(\RR^N\setminus \ol\Om\right)$, then we have that
$$
\lim_{\ve\to 0^+} \ve \log u^\ve_-(x) = -\frac{d_\Ga(x)}{\sqrt{\la}}, \quad \lim_{\ve\to 0^+} \ve \log u^\ve_+(x) = -\frac{d_\Ga(x)}{\sqrt{\La}},
$$
for any $x\in\ol\Om$. 

\item
If $\Om$ is of class $C^{0,\om}$, then as $\ve\to 0^+$
it holds that
\begin{equation*}
\ve \log u^\ve_- + \frac{d_\Ga}{\sqrt{\la}} =
\begin{cases}
 O\left(\ve \log |\log\psi_\om(\ve)|\right) \ &\mbox{ if } \ N=2 \ \mbox{ and } \ \la=\La,\\
 O\left( \ve \log\psi_\om(\ve)\right) \ &\mbox{ if } \ N\neq 2 \ \mbox{ or } \ \la\neq \La,
\end{cases}
\end{equation*}
\begin{equation*}
\ve \log u^\ve_+ + \frac{d_\Ga}{\sqrt{\La}} =
\begin{cases}
O\left(\ve \log \ve\right) \ &\mbox{ if } \ \La > \la(N-1),\\
O\left(\ve \log |\log\psi_\om(\ve)|\right) \ &\mbox{ if } \ \La = \la(N-1),\\
O\left( \ve \log\psi_\om(\ve)\right) \ &\mbox{ if } \ \La < \la(N-1),\\
\end{cases} 
\end{equation*}
uniformly on every compact subset of $\ol\Om$.
\end{enumerate}
\end{thm}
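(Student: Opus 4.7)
The plan is to prove both statements via the Varadhan-type barrier method: construct explicit radial sub- and super-solutions to $-\ve^2\cM^\pm(\na^2 w)+w=0$ attached to touching balls, and apply the viscosity comparison principle for $\cM^\pm$. The preliminary step is to solve the radial ODE exactly in two configurations. For an \emph{increasing} radial profile the Hessian has only positive eigenvalues, so both $\cM^\pm$ collapse to $\ga\De$ with $\ga=\La$ for $\cM^+$ and $\ga=\la$ for $\cM^-$, and the regular solution in a ball $B_R$ normalized by $w(R)=1$ is a scaled modified Bessel function of the first kind, with profile $\ve\log w(r)\sim -(R-r)/\sqrt{\ga}$ and correction $O\bigl(\ve\log(R/(R-r))\bigr)$. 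For a \emph{decreasing} radial profile in the exterior of a ball, the radial eigenvalue is positive while the $N-1$ tangential ones are negative, so $\cM^\pm$ combines $\la$ and $\La$ asymmetrically, producing a Bessel-type ODE governed by
$$
\alpha_+=\tfrac{\la(N-1)}{\La},\qquad \alpha_-=\tfrac{\La(N-1)}{\la},
$$
whose decaying solution has the boundary-layer profile $\ve\log w(r)\sim -(r-R)/\sqrt{\ga}$ with polynomial correction of order $\ve\log(r/R)$ whose coefficient is determined by $\alpha_\pm$.

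For part~(i), fix $x\in\ol\Om$ and $z\in\Ga$ with $|x-z|=d_\Ga(x)$. The upper bound on $u^\ve_\pm(x)$ comes from an interior ball $B_R(x_0)\subset\Om$ tangent to $\Ga$ at $z$ with $x$ on the segment $[z,x_0]$: comparing $u^\ve_\pm$ with the increasing radial solution $w_{\mathrm{int}}$ on $B_R(x_0)$ satisfying $w_{\mathrm{int}}(R)=1$ gives $u^\ve_\pm(x)\le w_{\mathrm{int}}(|x-x_0|)$, since $u^\ve_\pm\le 1=w_{\mathrm{int}}$ on $\pa B_R(x_0)$. The lower bound comes from an exterior ball $B_R(y)$ with $B_R(y)\cap\Om=\emptyset$ and $z\in\pa B_R(y)$: the decreasing radial solution $w_{\mathrm{ext}}$ on $\RR^N\setminus B_R(y)$ with $w_{\mathrm{ext}}(R)=1$ automatically satisfies $w_{\mathrm{ext}}\le 1=u^\ve_\pm$ on $\Ga\subset\{|\,\cdot\,-y|\ge R\}$, so comparison in $\Om$ yields $w_{\mathrm{ext}}(|x-y|)\le u^\ve_\pm(x)$. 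The hypothesis $\Ga=\pa(\RR^N\setminus\ol\Om)$ guarantees both touching balls at every boundary point; fixing a positive radius $R$ small enough for their availability and letting $\ve\to 0^+$ produces the limits $\ve\log u^\ve_+\to -d_\Ga/\sqrt{\La}$ and $\ve\log u^\ve_-\to -d_\Ga/\sqrt{\la}$.

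Part~(ii) is obtained by rerunning the same barriers in a $C^{0,\om}$ domain, now allowing the radius $R=R(\ve)$ to shrink to the scale $\psi_\om(\ve)$ dictated by the modulus of continuity, and tracking how the Bessel corrections enter. The three-case structure stems from the Bessel index $\nu_\pm=|\alpha_\pm-1|/2$ governing the exterior sub-solution. When $\nu_\pm>0$ the small-argument asymptotics of $K_{\nu_\pm}$ yield a power-law contribution and the generic remainder $O(\ve|\log\psi_\om(\ve)|)$, which is the third line for $\cM^+$ ($\La<\la(N-1)$, i.e., $\alpha_+>1$) and the non-degenerate line for $\cM^-$. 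The threshold $\nu_\pm=0$ corresponds exactly to $\La=\la(N-1)$ for $\cM^+$ and to $N=2$, $\la=\La$ for $\cM^-$; there the Bessel function $K_0$ with its logarithmic singularity at the origin upgrades the remainder to the slower $O(\ve\log|\log\psi_\om(\ve)|)$. The remaining case for $\cM^+$ ($\La>\la(N-1)$, so $\alpha_+<1$) has the $\log(r/R)$ correction in the exterior barrier with a coefficient of sign and size such that, after optimizing $R(\ve)$, its $\psi_\om$-dependence is absorbed into an $O(\ve\log\ve)$ remainder produced by the large-argument Bessel expansion alone.

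The main obstacle I expect is the quantitative analysis in part~(ii): one must optimize $R=R(\ve)$ against $\psi_\om(\ve)$ to balance the error coming from the Bessel prefactor, carefully expand the modified Bessel function both at large argument and, at the threshold $\nu_\pm=0$, at small argument to isolate the extra logarithm responsible for the slower iterated-log rate, and finally propagate the pointwise estimates to a uniform bound on every compact subset of $\ol\Om$. The last task is subtle because an interior touching ball of radius $\psi_\om(\ve)$ only controls $u^\ve_\pm$ at distance $\lesssim\psi_\om(\ve)$ from $\Ga$, so points of $\ol\Om$ far from the boundary must be handled either by an iteration of overlapping barriers or by combining the exterior sub-solution with an interior estimate derived from the equation itself.
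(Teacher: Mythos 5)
The decisive gap is in part (i): you assert that the hypothesis $\Ga=\pa\left(\RR^N\setminus\ol\Om\right)$ ``guarantees both touching balls at every boundary point'' of some positive radius. It does not: this is a purely topological condition (it only excludes boundary pieces not reachable from the exterior of $\ol\Om$), and a bounded open set whose complement forms an outward cusp at the foot point $z$ satisfies it while admitting no exterior tangent ball at $z$ of any positive radius (interior tangent balls can fail likewise at corners or cusps). Since your lower bound rests entirely on an exterior ball tangent at $z$ of fixed radius $R$, the argument breaks down precisely for the domains part (i) is meant to cover, and assuming such ball conditions would prove a strictly weaker theorem. The paper's route avoids this: the super-solution is the radial solution in $B_{d_\Ga(x)}(x)$, centered at $x$ itself (always contained in $\Om$, no boundary regularity needed), and the sub-solution is the radial solution in the exterior of $B_{d_\Ga(z)}(z)$ for an \emph{arbitrary} point $z\in\RR^N\setminus\ol\Om$; one first sends $\ve\to0^+$ with $z$ fixed and only afterwards lets $z\to y$, the nearest boundary point, the hypothesis being used solely to ensure that exterior points $z$ accumulate at $y$, so that $d_\Ga(z)\to0$ and $|x-z|\to d_\Ga(x)$.

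In part (ii) your identification of the three regimes through the exponent of the exterior radial equation ($b=-1+(N-1)\La/\la$ for $\cM^-$, $b=-1+(N-1)\la/\La$ for $\cM^+$), with the logarithmic threshold at $b=0$ and the constant limit for $-1<b<0$, is the same mechanism the paper uses; but your proof is incomplete exactly where you flag it, the uniformity on compact subsets, and the obstacle is misdiagnosed. No iteration of overlapping barriers and no shrinking of the interior ball are needed: the upper barrier remains the ball $B_{d_\Ga(x)}(x)$, which yields a remainder $O(\ve\log\ve)$ uniformly on $K$ (only $d=\max_K d_\Ga$ enters), while the lower barrier is the exterior radial solution for $B_{d_\Ga(z_\ve)}(z_\ve)$ with $z_\ve=(0',\ve)$ in the local chart at the nearest boundary point; this is a sub-solution on all of $\Om$, hence it controls $u^\ve_\pm(x)$ for every $x\in K$, however far from $\Ga$. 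The modulus enters only through $d_\Ga(z_\ve)\ge\psi_\om(\ve)$ and $|x-z_\ve|\le d_\Ga(x)+\ve\le 2d$, after which the stated rates follow from the small-argument expansions you describe. As written, the uniform estimates of part (ii) are not established.
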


In the parabolic regime, we obtain a somewhat weaker result. 

\begin{thm}[Short-time asymptotics]
\label{th:parabolic-Pucci}
Let $\Om$ be a bounded open set and let $v^\pm$ be the respective solutions of \eqref{parabolic-Pucci}.
The following claims hold true.
\begin{enumerate}[(i)]
\item
If $\Ga = \pa\left(\RR^N\setminus \ol{\Om} \right)$, then we have that
\begin{equation}
\label{parabolic pucci asymp}
\lim_{t\to 0^+}
 4 t\log v^-(x,t) = -\frac{d_\Ga(x)^2}{\la}, \quad  \lim_{t\to 0^+} 4 t\log v^+(x,t) = -\frac{d_\Ga(x)^2}{\La},
\end{equation}
for every $x\in\ol \Om$.
\item
If $\Om$ is of class $C^{0,\om}$, then as $t\to 0^+$ we have that
\begin{eqnarray}
\label{uniform ppucci asymp}
&&4t\log v^-(x,t) + \frac{d_\Ga(x)^2}{\la}= O\left(t \log \psi_\om(t)\right), \\
\label{uniform ppucci plus}
&&4t\log v^+(x,t) + \frac{d_\Ga(x)^2}{\La} = O\left(t \log \psi_\om(t)\right),
\end{eqnarray}
uniformly on every compact subset of $\ol\Om$.
\end{enumerate}
\end{thm}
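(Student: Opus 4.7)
The plan is to follow Varadhan's strategy as adapted to fully nonlinear settings in \cite{BM-JMPA, BM-AA}: combine the viscosity comparison principle with explicit radial barriers that solve the parabolic Pucci equation on a half-space and on the exterior of a ball, whose short-time behaviour is dictated by the complementary error function.

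First I identify the one-sided barriers. A $C^2$ function of the form $V(y\cdot\nu,t)$ that is convex in its first argument has Hessian $V_{yy}\,\nu\otimes\nu$ with its only non-zero eigenvalue positive; hence $\cM^\pm(\na^2 V)=\sigma^\pm V_{yy}$ with $\sigma^-=\la$ and $\sigma^+=\La$. So the half-space solution of \eqref{parabolic-Pucci} is $V^\pm(y,t)=\Erfc(y\cdot\nu/(2\sqrt{\sigma^\pm t}))$ and already satisfies $4t\log V^\pm(y,t)\to -(y\cdot\nu)^2/\sigma^\pm$. A radial solution $V_R^\pm(r,t)$ on the exterior of $B_R(y_0)$ is monotone decreasing and convex in $r$, so $\cM^\pm$ linearises and $V_R^\pm$ satisfies
$$
(V^\pm_R)_t=\sigma^\pm (V^\pm_R)_{rr}+(N-1)\frac{\tau^\pm}{r}(V^\pm_R)_r,\qquad r>R,
$$
with $\tau^+=\la$, $\tau^-=\La$; standard analysis of this linear equation (as carried out in \cite{BM-JMPA}) yields $4t\log V_R^\pm(r,t)\to -(r-R)^2/\sigma^\pm$ locally uniformly on $\{r>R\}$, with quantitative remainder. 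Moreover, the radial function $\bar v(y,t)=\Erfc(|y-z|/(2\sqrt{\sigma^\pm t}))$ is itself a global supersolution because $\pa_t\bar v-\cM^\pm(\na^2\bar v)=-(N-1)\tau^\pm r^{-1}\bar v_r\ge 0$.

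For part~(i), fix $x\in\Om$ and choose $z\in\Ga$ with $d_\Ga(x)=|x-z|$. The topological assumption $\Ga=\pa(\RR^N\setminus\ol\Om)$ produces $y_0\in\RR^N\setminus\ol\Om$ arbitrarily close to $z$, about which balls $B_\rho(y_0)\subset\RR^N\setminus\ol\Om$ of arbitrarily small radius can be placed; viscosity comparison then gives $v^\pm(x,t)\ge V^\pm_\rho(|x-y_0|,t)$ and, sending $y_0\to z$, $\rho\to 0^+$,
$$
\liminf_{t\to 0^+}4t\log v^\pm(x,t)\ge -d_\Ga(x)^2/\sigma^\pm.
$$
For the matching $\limsup$, I would compare $v^\pm$ on a ball centred at $x$ and contained in $\Om$ with the global supersolution $\bar v$ augmented by a small correction that closes the gap on the lateral parabolic boundary and vanishes as the ball shrinks to $\{x\}$ and $t\to 0^+$, thereby producing the reverse inequality $\limsup_{t\to 0^+}4t\log v^\pm(x,t)\le -d_\Ga(x)^2/\sigma^\pm$.

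Part~(ii) is the quantitative counterpart of (i) under $C^{0,\om}$: near each $z\in\Ga$ the graph condition sandwiches $\Ga$ between two parallel half-spaces at distance controlled by $\psi_\om$, and both the exterior-ball lower barrier and the $\bar v$-based upper barrier can be inserted at the same scale, with geometric error quantified by $\psi_\om$. Optimising the trade-off between this geometric error and the diffusive scale $\sqrt t$, in the spirit of the parabolic portion of \cite{BM-JMPA}, yields the common rate $O(t\log\psi_\om(t))$ in \eqref{uniform ppucci asymp}--\eqref{uniform ppucci plus}. The main obstacle is preserving the supersolution property of the upper barrier under the boundary perturbation: unlike for the $p$-laplacian, $\cM^\pm$ depends discontinuously on the sign pattern of the Hessian eigenvalues, so the perturbed radial barrier must be checked to remain convex along its radial direction and to possess tangential curvatures of the correct sign throughout the comparison region and for all relevant times.
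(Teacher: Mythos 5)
Your lower-bound strategy is workable in outline, but the upper bound (the $\limsup$ half of (i) and the upper half of (ii)) has a genuine gap. The radial supersolution $\bar v(y,t)=\Erfc\bigl(|y-z|/(2\sqrt{\sigma t})\bigr)$ is \emph{decreasing} in the distance from its centre, so if you centre it at (or near) the contact point $z_x\in\Ga$ and try to compare on $B_{d_\Ga(x)}(x)\times(0,\infty)$, it is exponentially small (in $1/t$) on most of the lateral boundary, where all you know about $v^\pm$ is $v^\pm\le 1$; the ordering on the parabolic boundary fails by an amount of order one, not by a ``small gap'', and the unspecified ``correction'' would have to be of order one on part of $\pa B_{d_\Ga(x)}(x)$ while contributing only an $e^{-d_\Ga(x)^2/(4\la t)}$-size error at the centre --- i.e.\ it would have to solve essentially the original problem. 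The natural alternative, $w(y,t)=\Erfc\bigl((d_\Ga(x)-|y-x|)/(2\sqrt{\la t})\bigr)$, does satisfy $w\ge 1$ on $\pa B_{d_\Ga(x)}(x)$, but it is a \emph{sub}solution, not a supersolution: since $w_r>0$ and $w_{rr}>0$, one has $\cM^-(\na^2 w)=\la\bigl(w_{rr}+\tfrac{N-1}{r}w_r\bigr)$ and $w_t=\la w_{rr}$, hence $w_t-\cM^-(\na^2 w)=-\la\tfrac{N-1}{r}w_r<0$ (same sign for $\cM^+$ with $\La$). This curvature term working against the upper barrier is exactly the obstruction the paper removes by a different device (Lemma \ref{lem:parabolic from above}): by positive one-homogeneity, $w(y,t)=e^{t/\ve^2}u^\ve_\pm(y)$, with $u^\ve_\pm$ the explicit elliptic radial solution of \eqref{elliptic-Pucci} in $B_{d_\Ga(x)}(x)$ from Lemma \ref{lem:solution ball}, solves the parabolic equation exactly, dominates $v^\pm$ on the parabolic boundary, and optimizing $\ve=2\sqrt{\la}\,t/d_\Ga(x)$ yields the sharp bound \eqref{eq:parabolic barrier from above}. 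Without this (or an equivalent) mechanism your proof of the upper estimates is missing.

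Two further points on the lower bound. First, your exterior-ball barrier is a subsolution of the Pucci equation only if you actually prove the sign pattern $ (V_R^\pm)_r\le 0$, $(V_R^\pm)_{rr}\ge 0$ for all $r>R$, $t>0$: in general one only has $\cM^-(\na^2 V)\le \la V_{rr}+\La\tfrac{N-1}{r}V_r$, so a solution of your linear radial equation is automatically a \emph{super}solution of the Pucci equation (the useless direction), and becomes a subsolution precisely when that sign pattern holds; unlike the elliptic case (Lemma \ref{lem:bessel}) there is no explicit integral representation making this evident, so it must be argued. Second, for part (ii) you need the short-time expansion to be quantitative and uniform as the exterior radius shrinks like $\psi_\om(t)$ (your ``sandwich between parallel half-spaces'' is not what $C^{0,\om}$ gives; the regularity only guarantees that the point $(0',t)$ lies outside $\ol\Om$ with $d_\Ga\ge\psi_\om(t)$). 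The paper sidesteps both issues with the explicit global subsolutions $\Phi^\pm(x,t)=t^{-\alpha_\pm}e^{-|x|^2/(4\sigma^\pm t)}$ of Lemma \ref{fundamental solution}, whose prefactor exponent is tuned to absorb the worst-case drift and whose normalization $A_\pm\sim\psi_\om(t)^{2\alpha_\pm}$ directly produces the $O\left(t\log\psi_\om(t)\right)$ rate in \eqref{uniform ppucci asymp}--\eqref{uniform ppucci plus}; if you retain your route, these verifications and the uniformity in $R\sim\psi_\om(t)$ must be supplied.
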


Once Theorems \ref{th:elliptic-Pucci} and \ref{th:parabolic-Pucci} are settled, we can easily derive formulas similar to \eqref{MS-formula} for Pucci operators. We shall present them in Section \ref{sec:q-mean-formulas}. In Section \ref{sec:asymptotics-with-symmetry}, we shall derive our asymptotic formulas in some spherically symmetric domains, in which solutions can be explicitly computed. These formulas will then be used to construct barriers for the problems in general domains in Section \ref{sec:barriers}. The proofs of Theorems \ref{th:elliptic-Pucci} and \ref{th:parabolic-Pucci} will be carried out in Subsections \ref{ssec:elliptic varadhan formula} and \ref{ssec:parabolic varadhan formula}, respectively.

This paper is dedicated to our friend and  colleague Sergio Vessella on the occasion of his $65^{th}$ birthday. Carlo Pucci, the inventor of the eponymous operators, was the advisor of the second author and Sergio's mentor. We thought that this paper could be an ideal gift to Sergio.

\section{Small diffusion asymptotics in symmetric domains}
\label{sec:asymptotics-with-symmetry}

\subsection{Preliminaries on Pucci operators}

As already mentioned, the operators $\cM^\pm$ defined in \eqref{def-Pucci} are fully nonlinear, in the sense that they are nonlinear in the variable $X$. Also, $-\cM^\pm$ are uniformly elliptic by definition, and hence (degenerate) elliptic for the theory of viscosity solutions, as shown in \cite{CIL} or by direct inspection. The positive homogeneity is evident. Another equivalent definition of $\cM^\pm$ can be given by introducing the set $\cA_{\la, \La}$ of all matrices $A\in\cS^N$ such that $\la\, I\le A\le\La\, I$, that means that $\la\,|\zi|^2\le\lan A \zi, \zi\ran\le\La\,|\zi|^2$ for every $\zi\in\RR^N$. In fact, it holds that
$$
\cM^-(X) = \inf_{A\in\cA_{\la,\La}} \tr \left(A X\right), \quad
\cM^+(X) = \sup_{A\in\cA_{\la,\La}} \tr \left(A X\right).
$$
From this definition, we easily infer in particular that
$$
\cM^- \left(\na^2 u\right)\le\De_p^G u\le \cM^+ \left(\na^2 u\right), 
$$
for $\la=\min\big\{\frac1{p}, \frac{p-1}{p}\big\}$ and $\La=\max\big\{\frac1{p}, \frac{p-1}{p}\big\}$.

For an extensive overview of the main properties of the operators $\cM^\pm$ defined in \eqref{def-Pucci}, see \cite{CC}. 

 \subsection{Pucci operators on radial functions}
 Assume that $u$ is a spherically symmetric function, namely $u(x)=u(r)$, where $r=|x|$.  We can explicitly calculate $\cM^\pm\left(\na^2 u\right)$ in terms of the radial derivatives $u_r$ and $u_{rr}$:
 \begin{align}
  \label{radial pucci minus}
  \cM^-\left(\na^2 u\right) = \be\left(u_{rr}\right) + \frac{N-1}{r} \be\left(u_r\right),
  \\
 \label{radial pucci plus}
  \cM^+\left(\na^2 u\right) = \ga\left(u_{rr}\right) + \frac{N-1}{r} \ga\left(u_r\right),
 \end{align}
 where $\be(\si)=\min(\la \si, \La \si)$ and $\ga(\si)=\max(\la \si, \La \si)=-\be(-\si)$ for $\si\in \RR$.
We note that the functions $\ga$ and $\be$ are just linear in the case their arguments do not change sign.

\subsection{Radial solutions of problem \eqref{elliptic-Pucci}}
 
%

We summarize \cite[Lemmas 2.3, 2.4]{ThesisBerti} in the following technical lemma.

\begin{lem}[Modified Bessel functions]
\label{lem:bessel}
Let two numbers $a>0$ and $b>-1$ be given and let $f, g:[0, \infty) \to (0,\infty)$ be the functions defined by
$$
g(\si)= \int_{0}^{\pi} e^{a \si \cos\te} \left(\sin\te\right)^b d\te, \quad
f(\si) = \int_{0}^{\infty} e^{-a \si \cosh\te} \left(\sinh\te\right)^b d\te,
$$
for any $\si \ge 0$.
\par
Then, $f$ and $g$ are both solutions in $(0,\infty)$ of the equation
$$
-h'' - \frac{b+1}{\si}\, h' + a^2 h=0,
$$
and are such that $g', g''\ge 0$ and $f'\le 0$, $f''\ge 0$. 
\par
Moreover, we have that
\begin{equation}
\label{eq:asymptotics-to-infinity-I}
g(\si) = 2^{\frac{b -1}{2}}\Ga\left(\frac{b+1}{2}\right)  (a\si)^{-\frac{b+1}{2}}  e^{a\si} \left\{1+ O(1/\si)\right\} ,
\end{equation}
\begin{equation}
\label{eq:asymptotics-to-infinity}
f(\si) =
2^\frac{b-1}{2} \Ga\left(\frac{b+1}{2}\right)(a\si)^{-\frac{b+1}{2}} e^{-a\si} \bigl\{ 1+O(1/\si)\bigr\},
\end{equation}
as $\si \to \infty$, and
\begin{equation}
\label{eq:asymptotics-to-zero}
f(\si)=\begin{cases}
\left(a\si\right)^{-b}\, \Ga\left(b\right)\bigl\{ 1+o(1)\bigr\}  \ &\mbox{ if } \ b>0, 
\vspace{6pt}
\\
-\log(a\si)+O(1) \ &\mbox{ if } \ b=0, 
\vspace{2pt}
\\
\frac{\sqrt{\pi}}{2\,\sin(b\pi/2)}\,\frac{\Ga\left(\frac{b+1}{2}\right)}{\Ga\left(\frac{b}{2}+1\right)}+o(1)  \ &\mbox{ if } \ -1<b<0,
\end{cases}
\end{equation}
as $\si\to 0^+$.
\end{lem}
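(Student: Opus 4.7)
The plan is to treat $g$ and $f$ as integral representations of modified Bessel functions of order $b/2$ (of the first and second kind, respectively) and prove every claim directly from the integral formulas, so that no external Bessel theory needs to be invoked.

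First, I would verify the ODE by differentiating under the integral. For $g$, writing $g'(\sigma) = a\int_0^\pi \cos\theta\, e^{a\sigma\cos\theta}(\sin\theta)^b\, d\theta$ and $g''(\sigma) = a^2\int_0^\pi \cos^2\theta\, e^{a\sigma\cos\theta}(\sin\theta)^b\, d\theta$, I would form the combination $-g''-\frac{b+1}{\sigma}g'+a^2g$, use $a^2(1-\cos^2\theta)=a^2\sin^2\theta$, and integrate by parts in $\theta$ the term containing the factor $\frac{1}{\sigma}\cos\theta$ via $\frac{d}{d\theta}e^{a\sigma\cos\theta}=-a\sigma\sin\theta\, e^{a\sigma\cos\theta}$. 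The boundary terms at $\theta=0,\pi$ vanish because $(\sin\theta)^{b+1}$ does (as $b+1>0$), and the interior contributions cancel. An identical scheme, now using $\cosh^2\theta-1=\sinh^2\theta$ together with the vanishing of $(\sinh\theta)^{b+1}$ at $\theta=0$ and the exponential decay at infinity, handles $f$. Monotonicity and convexity follow at once: $g'',f''\ge 0$ and $f'\le 0$ are visible from the nonnegativity of the corresponding integrands, and $g'\ge 0$ comes from splitting $\int_0^\pi$ at $\pi/2$ and mapping $\theta\mapsto \pi-\theta$ on the second half, which rewrites $g'(\sigma)$ as $a\int_0^{\pi/2}\cos\theta\,(\sin\theta)^b(e^{a\sigma\cos\theta}-e^{-a\sigma\cos\theta})\, d\theta\ge 0$.

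For the asymptotics as $\sigma\to\infty$, I would apply Laplace's method. In both integrals the critical region is a neighborhood of $\theta=0$: for $g$ the maximum of the exponent (after factoring out $e^{a\sigma}$ via $\cos\theta=1-\theta^2/2+O(\theta^4)$), for $f$ the minimum (after factoring out $e^{-a\sigma}$ via $\cosh\theta=1+\theta^2/2+O(\theta^4)$). Rescaling $\theta=s/\sqrt{a\sigma}$ and using $(\sin\theta)^b,(\sinh\theta)^b\sim\theta^b$ transforms each integral into $(a\sigma)^{-(b+1)/2}\int_0^\infty e^{\mp s^2/2}s^b\, ds\,(1+o(1))=2^{(b-1)/2}\Gamma\!\left(\tfrac{b+1}{2}\right)(a\sigma)^{-(b+1)/2}(1+o(1))$, while the contribution from $\theta$ bounded away from $0$ is exponentially smaller than the main term. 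Carrying one additional term in the Taylor expansion of $\cos\theta$ or $\cosh\theta$ and of $(\sin\theta)^b$ or $(\sinh\theta)^b$ supplies the $O(1/\sigma)$ remainder.

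The most delicate part, which I expect to be the main obstacle, is the behavior of $f$ as $\sigma\to 0^+$, which splits into three regimes. I would substitute $u=a\sigma\cosh\theta$ to obtain the clean form
\begin{equation*}
f(\sigma)=(a\sigma)^{-b}\int_{a\sigma}^\infty e^{-u}\bigl(u^2-(a\sigma)^2\bigr)^{(b-1)/2}du.
\end{equation*}
When $b>0$, dominated convergence (with integrable dominant $e^{-u}u^{b-1}\mathbf{1}_{\{u\ge a\sigma\}}$ on compact subsets and the unscaled integrand uniformly bounded near $\infty$) yields the integral $\to\int_0^\infty e^{-u}u^{b-1}du=\Gamma(b)$, proving the first asymptotic. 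When $b=0$ the same substitution gives $f(\sigma)=\int_{a\sigma}^\infty e^{-u}/\sqrt{u^2-(a\sigma)^2}\, du$; splitting at $u=1$, the tail is bounded, while on $(a\sigma,1)$ the expansion $e^{-u}=1+O(u)$ combined with the explicit antiderivative $\log\bigl(u+\sqrt{u^2-(a\sigma)^2}\bigr)$ produces $-\log(a\sigma)+O(1)$. When $-1<b<0$, the integrand of the original definition of $f$ is dominated, uniformly in small $\sigma$, by $(\sinh\theta)^b$, which is integrable on $(0,\infty)$ precisely because $b>-1$ (integrability at $0$) and $b<0$ (exponential decay at $\infty$); hence $f(\sigma)\to\int_0^\infty(\sinh\theta)^b d\theta$, and this limit integral is evaluated by substituting $x=e^{-2\theta}$, identifying it with a value of the Beta function, and applying Euler's reflection formula together with the Legendre duplication formula to recast it in the claimed form involving $\sin(b\pi/2)$ and ratios of Gamma functions.
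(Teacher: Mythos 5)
Your proposal is correct in substance, but it takes a genuinely different route from the paper: the paper does not prove this lemma at all in the text, it simply summarizes Lemmas 2.3 and 2.4 of the cited thesis, i.e.\ it leans on the classical theory of modified Bessel functions ($g$ and $f$ are, up to explicit factors, $I_{b/2}(a\si)$ and $K_{b/2}(a\si)$, whose ODE and asymptotics at $0$ and $\infty$ are standard). You instead extract everything directly from the integral representations: integration by parts in $\te$ (using that $(\sin\te)^{b+1}$, resp.\ $(\sinh\te)^{b+1}$, vanishes at the endpoints since $b+1>0$) to verify the ODE; sign inspection and the reflection $\te\mapsto\pi-\te$ for the monotonicity and convexity claims; Laplace/Watson's method at $\te=0$ for \eqref{eq:asymptotics-to-infinity-I}--\eqref{eq:asymptotics-to-infinity}, where the substitution $u=1-\cos\te$ or $u=\cosh\te-1$ indeed yields the constant $2^{\frac{b-1}{2}}\Gamma\bigl(\tfrac{b+1}{2}\bigr)$ and relative corrections $O(1/\si)$; and the substitution $u=a\si\cosh\te$ for \eqref{eq:asymptotics-to-zero}. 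This buys a self-contained, elementary argument at the price of redoing classical computations. Two small repairs are needed. First, in the case $0<b<1$ your proposed dominant $e^{-u}u^{b-1}$ does not majorize $e^{-u}\bigl(u^2-(a\si)^2\bigr)^{\frac{b-1}{2}}$ near $u=a\si$, since there $\bigl(u^2-(a\si)^2\bigr)^{\frac{b-1}{2}}\ge u^{b-1}$; the conclusion is saved by splitting at $u=2a\si$, where the near piece is bounded by a constant times $(a\si)^{b}$ and on the far piece one has $u^2-(a\si)^2\ge \tfrac34 u^2$, so genuine domination holds. Second, your Beta-function evaluation of $\int_0^\infty(\sinh\te)^b\,d\te$ for $-1<b<0$ gives, after reflection and duplication, the value $\frac{\sqrt{\pi}}{2\,\sin(-b\pi/2)}\,\frac{\Gamma\left(\frac{b+1}{2}\right)}{\Gamma\left(\frac{b}{2}+1\right)}=\frac{\Gamma\left(\frac{b+1}{2}\right)\Gamma\left(-\frac{b}{2}\right)}{2\sqrt{\pi}}$, which is positive, as it must be since $f>0$; the expression with $\sin(b\pi/2)$ displayed in the statement is a sign slip, so rather than ``recasting it in the claimed form'' your computation in fact corrects the sign.
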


We now derive the solutions and their relevant asymptotics for the case of a ball.

\begin{lem}[Solutions in the ball]
\label{lem:solution ball}
Let $\Om =B_R(0)$. Then the solutions of \eqref{elliptic-Pucci} are given by the functions $u_\pm^\ve$ defined by 
\begin{equation}
\label{radial solution}
u^\ve_-(x)= \frac{\int_{0}^{\pi} e^{ \frac{|x|}{\ve} \frac{\cos \te}{\sqrt{\la}}} \left(\sin\te\right)^{N-2} d\te}
{\int_{0}^{\pi} e^{ \frac{R}{\ve} \frac{\cos \te}{\sqrt{\la}}} \left(\sin\te\right)^{N-2} d\te}\, , \quad
u^\ve_+(x)= \frac{\int_{0}^{\pi} e^{ \frac{|x|}{\ve} \frac{\cos \te}{\sqrt{\La}}} \left(\sin\te\right)^{N-2} d\te}
{\int_{0}^{\pi} e^{ \frac{R}{\ve} \frac{\cos \te}{\sqrt{\La}}} \left(\sin\te\right)^{N-2} d\te}\, ,
\end{equation}
for any $x\in\ol\Om$.
Moreover, we have that
\begin{equation*}
\ve \log u^\ve_- + \frac{d_\Ga}{\sqrt{\la}} = O\left( \ve \log \ve\right), \quad \ve \log u^\ve_+ + \frac{d_\Ga}{\sqrt{\La}} = O\left( \ve \log \ve\right),
\end{equation*}
uniformly on $\ol\Om$ as $\ve\to 0^+$.
\end{lem}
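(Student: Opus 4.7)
The idea is to construct the radial viscosity solutions of \eqref{elliptic-Pucci} in the ball explicitly from the function $g$ of Lemma~\ref{lem:bessel}, and then read off the asymptotics from \eqref{eq:asymptotics-to-infinity-I}.

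First, take $a = 1/\sqrt{\la}$ and $b = N-2$ in Lemma~\ref{lem:bessel} and define $u^\ve_-(x) = g(|x|/\ve)/g(R/\ve)$. The symmetry $\te\mapsto\pi-\te$ in the integral representation of $g$ shows that $g$ is even, so $g(|x|/\ve)$ is smooth on $\RR^N$, and Lemma~\ref{lem:bessel} provides $g', g''\ge 0$, so $u^\ve_-$ is radially increasing and convex. Since $\be(\si) = \la\si$ for $\si\ge 0$, formula \eqref{radial pucci minus} collapses to $\cM^-(\na^2 u^\ve_-) = \la\De u^\ve_-$, and the ODE for $g$, pulled back via $\si = r/\ve$, is exactly $-\ve^2\la\De u^\ve_- + u^\ve_- = 0$. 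Hence $u^\ve_-$ is a classical (and \textit{a fortiori} viscosity) solution of \eqref{elliptic-Pucci} satisfying $u^\ve_- = 1$ on $\pa B_R(0)$, and the comparison principle for uniformly elliptic Pucci equations (see \cite{CIL}) identifies it with the unique bounded viscosity solution. The $+$ case is handled identically, with $\La$ in place of $\la$ and \eqref{radial pucci plus} together with $\ga(\si) = \La\si$ for $\si\ge 0$.

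For the asymptotics, taking logarithms in \eqref{radial solution} and substituting \eqref{eq:asymptotics-to-infinity-I} yields
\[
\log g(\si) = \frac{\si}{\sqrt{\la}} - \frac{N-1}{2}\log\si + C + O(1/\si)\qquad(\si\to\infty),
\]
for some constant $C$. When $|x|\ge\ve$ both $|x|/\ve$ and $R/\ve$ are large; subtracting this expansion at the two arguments, multiplying by $\ve$, and using $d_\Ga(x) = R - |x|$ on $\ol{B_R(0)}$, one obtains
\[
\ve\log u^\ve_-(x) + \frac{d_\Ga(x)}{\sqrt{\la}} = -\frac{N-1}{2}\,\ve\log\frac{|x|}{R} + O\bigl(\ve^2/|x|\bigr),
\]
in which both summands are $O(\ve\log\ve)$ uniformly, since $|\log(|x|/R)|\le\log(R/\ve)$ and $\ve^2/|x|\le\ve$. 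The only region requiring separate care is $|x|<\ve$, where the large-argument expansion \eqref{eq:asymptotics-to-infinity-I} is not applicable to the numerator; but continuity and positivity of $g$ at $0$ force $\log g(|x|/\ve) = O(1)$, so the $\ve^{-1}$ scale is carried entirely by $-\log g(R/\ve)$, and combining with $d_\Ga(x) = R - O(\ve)$ still yields an $O(\ve\log\ve)$ bound. Gluing the two regimes delivers the claimed uniform estimate on $\ol\Om$, and the argument for $u^\ve_+$ is word for word the same with $\La$ replacing $\la$. The main, though minor, technical point is precisely this matching at $|x|\sim\ve$, which the monotonicity and positivity of $g$ neutralize at no cost.
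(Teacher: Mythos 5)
Your proposal is correct, and its first half (identifying $u^\ve_\pm$ with $g(|x|/\ve)/g(R/\ve)$, using $g',g''\ge 0$ so that $\cM^\mp$ collapses to $\la\De$ resp. $\La\De$ on this radial convex increasing function, then invoking uniqueness/comparison) is the same as the paper's. For the asymptotic estimate, however, you take a genuinely different route: you expand $\log g$ via \eqref{eq:asymptotics-to-infinity-I} at \emph{both} arguments $|x|/\ve$ and $R/\ve$ and then patch the region $|x|<\ve$ by hand, whereas the paper observes that $u^\ve_-(x)\,e^{a(R-|x|)}$ is monotone in $|x|$, hence sandwiched between $1$ and its value at $|x|=0$, so that the Bessel asymptotics are needed only at the single argument $R/\ve$; this gives the uniform $O(\ve\log\ve)$ bound on all of $\ol\Om$ in one stroke, with no regime splitting. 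Your version works but needs one small repair in the statement of the intermediate step: \eqref{eq:asymptotics-to-infinity-I} guarantees the relative error $O(1/\si)$ only for $\si$ larger than some fixed $\si_0$, so the displayed identity with error $O(\ve^2/|x|)$ is not literally justified down to $|x|=\ve$; either place the cut at $|x|\ge\si_0\,\ve$ (the inner region argument is unchanged, since $\log g$ is bounded on $[0,\si_0]$ by continuity and positivity of $g$), or note that on compact $\si$-intervals the discrepancy between $\log g(\si)$ and its expansion is $O(1)$, which after multiplication by $\ve$ is harmless. With that adjustment both summands are indeed $O(\ve|\log\ve|)$ uniformly, and the matching at $|x|\sim\ve$ goes through as you describe; the paper's sandwich simply buys you immunity from this bookkeeping, while your computation has the mild advantage of exhibiting the precise first-order correction $-\tfrac{N-1}{2}\,\ve\log\bigl(|x|/R\bigr)$.
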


\begin{proof}
By uniqueness, it is sufficient to check that $u^\ve_\pm$ satisfies \eqref{elliptic-Pucci}. We just verify the case of $u^\ve_-$. 
\par
We write $u^\ve_-(x)= u(r)$ with $r=|x|$. Since $u(x) = g(r)/g(R)$, where $g$ is given in Lemma \ref{lem:bessel} with $a= 1/(\sqrt{\la}\,\ve)$ and $b= N-2$, we have that
\begin{equation*}
u_{rr} + (N-1)\frac{u_r}{r} = \frac{u}{\ve^2 \la} \ \mbox{ in } \ (0,R).
\end{equation*}
Since both $u_r$ and $u_{rr}$ are positive in $(0,R)$ (see Lemma \ref{lem:bessel}), then  we have that
$$
\cM^-\left(\na ^2 u^{\ve}_-(x)\right)= \la\, \left\{ u_{rr}(r) + (N-1)\frac{u_r(r)}{r}\right\}=\ve^{-2} u(r)=\ve^{-2} u^\ve_-(x),
$$
and hence $u^\ve_-$ satisfies the differential equation in \eqref{elliptic-Pucci} corresponding to the negative superscript. A direct inspection also gives that $u^\ve_-=u(R)=1$ on $\Ga$.
\par 
Now, we observe that 
\begin{equation*}
u^\ve_-(x)\,e^{a (R-|x|)} = \\
\frac{ \int_{0}^{\pi} e ^{-a |x| (1-\cos\te)} \left(\sin\te\right)^{N-2}\,d\te }
{ \int_{0}^{\pi} e ^{-a R (1-\cos\te)} \left(\sin\te\right)^{N-2}\,d\te }
\end{equation*}
is monotonic with respect to $|x|$. Thus,
\begin{equation*}
 1\leq 
u^\ve_-(x)\,e^{a (R-|x|)} \le \frac{\int_{0}^{\pi} \left(\sin\te\right)^{N-2}\,d\te} 
{\int_{0}^{\pi} e ^{-a R (1-\cos\te)} \left(\sin\te\right)^{N-2}\,d\te },
\end{equation*}
and hence
\begin{multline*}
0\leq \ve \log u^\ve_-(x) + \ve a d_\Ga(x) \leq \\
 \ve \log \left\{ \int_{0}^{\pi} \left(\sin\te\right)^{N-2}d\te\right\} - \ve \log \left\{ \int_{0}^{\pi} e ^{-a R (1-\cos\te)} \left(\sin\te\right)^{N-2} d\te \right\}.
\end{multline*}
We thus infer the desired asymptotics by just observing that
\begin{multline*}
\ve \log \left\{ \int_{0}^{\pi} e ^{-a R (1-\cos\te)} \left(\sin\te\right)^{N-2} d\te \right\} =\\
 \ve \log \left\{2^\frac{N-3}{2}\Ga\left(\frac{N-1}{2}\right) \left(\frac{\sqrt{\la}\,\ve}{R}\right)^\frac{N-1}{2} \left[ 1 + O(\ve) \right] \right\} \ \mbox {as } \ve\to 0^+,
\end{multline*}
thanks to \eqref{eq:asymptotics-to-infinity-I} of Lemma \ref{lem:bessel}.
\end{proof}



The next lemma provides the radial solutions of \eqref{elliptic-Pucci} for the exterior problem.
\begin{lem}[Solutions in the exterior of a ball]
\label{lem:exterior solution}
Let $\Om = \RR^N \setminus \ol{B_R}(0)$. Then, the solutions of \eqref{elliptic-Pucci} are the functions $u^\ve_\pm$ defined by
\begin{multline*}
u^\ve_-(x) =
\frac{\int_{0}^{\infty} e^{-\frac{|x|}{\ve} \frac{\cosh\te}{\sqrt{\la}}} \left(\sinh\te\right)^{-1+(N-1) \La/\la} d\te}
{\int_{0}^{\infty} e^{-\frac{R}{\ve} \frac{\cosh\te}{\sqrt{\la}}} \left(\sinh\te\right)^{-1+(N-1) \La/\la} d\te},
\\
u^\ve_+(x) =
\frac{\int_{0}^{\infty} e^{-\frac{|x|}{\ve} \frac{\cosh\te}{\sqrt{\La}}} \left(\sinh\te\right)^{-1+(N-1) \la/\La} d\te}
{\int_{0}^{\infty} e^{-\frac{R}{\ve} \frac{\cosh\te}{\sqrt{\La}}} \left(\sinh\te\right)^{-1+(N-1) \la/\La} d\te}, 
\end{multline*}
for any $x\in \ol\Om$ .
Moreover, we have that
\begin{equation*}
\ve\log  u^\ve_- + \frac{d_\Ga}{\sqrt{\la}} = O\left(\ve\right), \quad 
\ve\log  u^\ve_+ + \frac{d_\Ga}{\sqrt{\La}} = O\left(\ve\right),
\end{equation*}
uniformly on compact subsets of $\ol\Om$ as $\ve \to 0^+$.
\end{lem}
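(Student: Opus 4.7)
The plan is to follow the scheme of Lemma \ref{lem:solution ball}: first verify that the stated functions $u^\ve_\pm$ are the (unique) bounded viscosity solutions of \eqref{elliptic-Pucci} on $\Om=\RR^N\setminus\ol{B_R(0)}$, and then extract the uniform asymptotic behavior from \eqref{eq:asymptotics-to-infinity} of Lemma \ref{lem:bessel}.

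To verify the formula for $u^\ve_-$, write $u^\ve_-(x)=f(|x|)/f(R)$ with $f$ the function from Lemma \ref{lem:bessel} corresponding to $a=1/(\sqrt\la\,\ve)$ and $b=-1+(N-1)\La/\la$; note that $b>-1$ since $\la,\La>0$ and $N\ge 2$. Because $f'\le 0$ and $f''\ge 0$, the radial derivatives satisfy $u_r\le 0$ and $u_{rr}\ge 0$, hence $\be(u_{rr})=\la\,u_{rr}$ and $\be(u_r)=\La\,u_r$. From \eqref{radial pucci minus} we obtain
\[
\cM^-\left(\na^2 u^\ve_-\right) = \la\,u_{rr}+\frac{(N-1)\La}{r}\,u_r,
\]
so that $-\ve^2\cM^-(\na^2 u)+u=0$ reduces precisely to $-h''-(b+1)h'/\si+a^2 h=0$, which is solved by $f$. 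The boundary condition $u^\ve_-=1$ on $\pa B_R(0)$ is immediate, and boundedness (indeed $u^\ve_-(x)\to 0$ as $|x|\to\infty$) follows from \eqref{eq:asymptotics-to-infinity}; uniqueness of bounded viscosity solutions then identifies $u^\ve_-$ as the required solution. The case of $u^\ve_+$ is entirely analogous: the same signs of $u_r,u_{rr}$ now select the other branches of $\ga$, yielding the parameters $a=1/(\sqrt\La\,\ve)$ and $b=-1+(N-1)\la/\La$.

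For the asymptotics, observe that $a\si=\si/(\sqrt\la\,\ve)\to\infty$ uniformly as $\ve\to 0^+$ whenever $\si$ ranges in a compact subset of $[R,\infty)$, so \eqref{eq:asymptotics-to-infinity} applies uniformly at both $\si=|x|$ and $\si=R$. Using $\ve a=1/\sqrt\la$, this gives
\[
\ve\log f(\si) = -\frac{\si}{\sqrt\la}-\frac{\ve(b+1)}{2}\log(a\si)+\ve\log C+O(\ve^2),
\]
with $C=2^{(b-1)/2}\Ga((b+1)/2)$. Subtracting the expressions at $\si=|x|$ and $\si=R$, the $\ve\log C$ cancels and the linear terms combine with $d_\Ga(x)=|x|-R$ to vanish, leaving
\[
\ve\log u^\ve_-(x)+\frac{d_\Ga(x)}{\sqrt\la}=-\frac{\ve(b+1)}{2}\log(|x|/R)+O(\ve^2),
\]
which is $O(\ve)$ uniformly on compact subsets of $\ol\Om$. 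The same computation, with $\sqrt\La$ in place of $\sqrt\la$ and the corresponding value of $b$, gives the asymptotics for $u^\ve_+$.

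The main subtlety compared to the ball case is twofold. First, the $O(1/\si)$ remainder in Lemma \ref{lem:bessel} must be read uniformly as $O(1/(a\si))=O(\ve)$ for $\si\ge R$, which is natural because $f(\si)$ depends only on the product $a\si$; this makes the contribution $\ve\log[1+O(\ve)]=O(\ve^2)$ absorbable into the final $O(\ve)$. Second, on the unbounded domain $\Om$ the bounded-solution restriction is essential, since the companion Bessel-type solution $g$ of Lemma \ref{lem:bessel} grows exponentially at infinity and would otherwise provide competing unbounded candidates.
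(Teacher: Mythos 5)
Your proposal is correct. The verification that $u^\ve_\pm$ solve \eqref{elliptic-Pucci} is essentially identical to the paper's: you identify $u^\ve_-=f(|x|)/f(R)$ with $a=1/(\sqrt\la\,\ve)$, $b=-1+(N-1)\La/\la$, use the signs $u_r\le 0$, $u_{rr}\ge 0$ from Lemma \ref{lem:bessel} to linearize $\cM^\pm$ on radial functions, and invoke uniqueness of the bounded solution. Where you diverge is the asymptotics. The paper exploits the monotonicity of $u^\ve_-(x)\,e^{a(|x|-R)}$ in $|x|$ to sandwich it between $1$ and a ratio of two integrals evaluated at the fixed radii $\de$ and $R$, so that \eqref{eq:asymptotics-to-infinity} only needs to be applied at two fixed values of $\si$ with $a\to\infty$; you instead expand $\ve\log f(\si)$ directly at $\si=|x|$ and $\si=R$ and subtract, which requires reading the remainder $O(1/\si)$ in \eqref{eq:asymptotics-to-infinity} uniformly as $O(1/(a\si))=O(\ve)$ for $\si\ge R$. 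You correctly justify this by noting that $f(\si)$ depends only on the product $a\si$ (and this is in fact how the paper itself uses the expansion elsewhere, obtaining factors $[1+O(\ve)]$ at fixed $\si$). Your route buys a sharper conclusion, namely the explicit leading correction $-\tfrac{\ve(b+1)}{2}\log(|x|/R)+O(\ve^2)$, at the price of having to address this uniformity explicitly, while the paper's monotonicity trick sidesteps the issue at the cost of a cruder two-sided bound. Both arguments yield the stated $O(\ve)$ uniformly on compact subsets of $\ol\Om$.
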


\begin{proof}
As before, we let $u^\ve_-(x)=u(r)$ with $r=|x|$. We then observe that $u(r)= f(r)/f(R)$, where this time $f$ is given in Lemma \ref{lem:bessel} with $a=1/(\sqrt{\la}\,\ve)$ and $b=-1+ (N-1)\,\La/\la$. Also, Lemma \ref{lem:bessel} informs us that $u_r(r) < 0$ and $u_{rr}(r) > 0$, and hence we have that
$$
\cM^-\left(\na^2 u^\ve_-(x)\right) = \la\,\left\{u_{rr}(r) + \frac{\La}{\la}\,\frac{N-1}{r}\, u_r(r) \right\}=\ve^{-2} u(r)=\ve^{-2} u^\ve_-(x).
$$
The boundary values of $u^\ve_-$ can be verified by inspection as before. 
\par
Now, the asymptotic formula follows since the function $u^\ve(x) \,e^{a(|x|-R)}$ is monotonic in $|x|$ and hence, for a given $\de>0$, we have that
\begin{equation*}
\frac{\int_{0}^{\infty} e^{a\de (1-\cosh\te)} \left(\sinh\te\right)^b d\te}
{\int_{0}^{\infty} e^{a R (1-\cosh\te)} \left(\sinh\te\right)^b d\te} \le 
u^\ve(x)\,e^{a(|x|-R)} \le 1 \ \mbox{ for } \ |x|\le\de.
\end{equation*}
Indeed, the last inequality gives that
\begin{equation*}
\ve \log \left\{\frac{\int_{0}^{\infty} e^{a\de (1-\cosh\te)} \left(\sinh\te\right)^b d\te}
{\int_{0}^{\infty} e^{a R (1-\cosh\te)} \left(\sinh\te\right)^b d\te}\right\} \le \ve \log\{u^\ve(x)\} + a \ve d_\Ga(x) \le 0.
\end{equation*}
We conclude by noticing that, by \eqref{eq:asymptotics-to-infinity} of Lemma \ref{lem:bessel}, it holds that
\begin{equation*}
\ve \log \left\{\frac{\int_{0}^{\infty} e^{a\de (1-\cosh\te)} \left(\sinh\te\right)^b d\te}
{\int_{0}^{\infty} e^{a R (1-\cosh\te)} \left(\sinh\te\right)^b d\te}\right\}  
= \ve \log \left\{ \left(\frac{R}{\de}\right)^\frac{(N-1)\La}{2\la} \left[1 + O(\ve)\right] \right\},
\end{equation*} 
as $\ve\to 0$.
\end{proof}

%

\section{Small diffusion asymptotics in general domains}
\label{sec:barriers}

In this section, we will use the radially symmetric solutions to construct useful barriers for problem \eqref{elliptic-Pucci} in quite general domains.

\subsection{Barriers from above and below}
We first recall a comparison principle for the differential equation in \eqref{elliptic-Pucci}, that works even in
unbounded domains. The result is an application of \cite[Theorem 2.2]{Sat} (see also \cite[Proposition 5.5]{Ko}).
\begin{lem}
\label{lem:elliptic comparison}
Let $\underline{u}$ and $\overline{u}$ be a sub-solution and a super-solution of the first equation in \eqref{elliptic-Pucci}, in the viscosity sense. 
Assume that $\underline{u}$ and $\overline{u}$ are continuous and bounded on $\ol{\Om}$, and that $\underline{u}\leq \overline{u}$ on $\Ga$.
Then, it holds that $\underline{u}\leq \overline{u}$ on $\ol\Om$.
\end{lem}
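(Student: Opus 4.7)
The plan is to recast the equation $-\ve^2\cM^-(\na^2 u)+u=0$ in the standard viscosity form $F(x,u,\na u,\na^2 u)=0$ with
\[
F(x,s,\xi,X)=-\ve^2\cM^-(X)+s,
\]
verify the structural hypotheses required by \cite[Theorem 2.2]{Sat} (or equivalently \cite[Proposition 5.5]{Ko}), and invoke that comparison principle directly. Using the representation $\cM^-(X)=\inf_{A\in\cA_{\la,\La}}\tr(AX)$ recorded in Section~\ref{sec:asymptotics-with-symmetry}, whenever $X\ge Y$ we have $\tr(AX)\ge \tr(AY)$ for every $A\in\cA_{\la,\La}$, so $\cM^-(X)\ge \cM^-(Y)$; hence $F$ is nonincreasing in $X$, that is, degenerate elliptic in the sense of \cite{CIL}. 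Since $F$ is independent of $x$ and $\xi$, the continuity/modulus hypotheses in those variables are trivial, and $-\cM^-(X)$ is Lipschitz in $X$ with constant $\Lambda$.

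The decisive ingredient is the strict properness
\[
F(x,s,\xi,X)-F(x,r,\xi,X)=s-r\qquad\text{for all } s\ge r,
\]
i.e.\ uniform monotonicity in $s$ with coefficient $\gamma_0=1>0$. Together with the boundedness of $\underline u$ and $\overline u$ on $\ol\Om$, this is precisely the coercive lower-order structure under which \cite[Theorem 2.2]{Sat} delivers comparison even on unbounded domains. Thus I would simply quote that theorem: suppose for contradiction that $\sup_{\ol\Om}(\underline u-\overline u)=\delta>0$; since $\underline u\le\overline u$ on $\Ga$, this positive supremum is forced into $\Om$ (in the doubling-variables sense). The standard Crandall--Ishii doubling argument with penalty $\tfrac{1}{2\alpha}|x-y|^2+\beta\,\chi(x)$ (with $\chi(x)=\log(1+|x|^2)$ or similar to confine the maximizers), followed by the theorem of sums, produces matrices $X_\alpha\le Y_\alpha$ and the viscosity inequalities for $\underline u$ and $\overline u$; subtracting them and using degenerate ellipticity $-\cM^-(X_\alpha)\ge -\cM^-(Y_\alpha)$ together with $\gamma_0=1$ yields $\delta\le o(1)$ as $\alpha,\beta\to 0^+$, a contradiction.

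The main obstacle, and the only nontrivial point in the citation, is controlling the argument when $\Om$ is unbounded: one cannot assume the supremum is attained, and the penalization $\beta\chi$ adds an extra term whose Pucci image must be dominated as $\beta\to 0^+$. This is where the strict positivity of the zeroth-order coefficient (the $+u$ term with coefficient exactly $1$, independent of $\ve$ only up to the rescaling absorbed in $F$) is used in an essential way: it produces an absorbing term $\delta$ that beats the vanishing penalty contribution. Since $|\cM^-(\na^2\chi)|$ grows at most like $\Lambda|\na^2\chi|$ and $\chi$ can be chosen with bounded Hessian, the compatibility check is routine, and the hypotheses of \cite[Theorem 2.2]{Sat} are fully met. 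The conclusion $\underline u\le\overline u$ on $\ol\Om$ follows.
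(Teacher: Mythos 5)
Your proposal follows the same route as the paper, which proves the lemma simply by observing that $F(x,s,\xi,X)=-\ve^2\cM^-(X)+s$ is degenerate elliptic and strictly proper in $s$ and then invoking the comparison principle of \cite[Theorem 2.2]{Sat} (equivalently \cite[Proposition 5.5]{Ko}) valid for unbounded domains under the boundedness assumption. Your additional sketch of the doubling-variables/penalization argument is just the standard content of the cited theorem, and your verification of its hypotheses is correct (up to the harmless choice of matrix norm in the Lipschitz constant for $\cM^-$).
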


The following two lemmas are based on Lemmas \ref{lem:solution ball} and \ref{lem:exterior solution}.

\begin{lem}[Barriers from above]
\label{lem:barrier from above}
Assume that $u^\ve_\pm$ are the solutions of \eqref{elliptic-Pucci}. 
Then, it holds that
\begin{equation}
\label{eq:barrier from above}
\begin{aligned}
&&\ve \log u^\ve_-(x) +\frac{d_\Ga(x)}{\sqrt{\la}} \leq  \ve \log\left[ \frac{\int_{0}^{\pi} \left(\sin\te\right)^{N-2} d\te}{\int_{0}^{\pi} e^{-\frac{d_\Ga(x)}{\ve}\frac{1-\cos\te}{\sqrt{\la}}}\left(\sin\te\right)^{N-2} d\te } \right], \\
&&\ve \log u^\ve_+(x) +\frac{d_\Ga(x)}{\sqrt{\La}} \leq  \ve \log\left[ \frac{\int_{0}^{\pi} \left(\sin\te\right)^{N-2} d\te}{\int_{0}^{\pi} e^{-\frac{d_\Ga(x)}{\ve}\frac{1-\cos\te}{\sqrt{\La}}}\left(\sin\te\right)^{N-2} d\te } \right],
\end{aligned}
\end{equation}
for any $x\in\ol\Om$.
\end{lem}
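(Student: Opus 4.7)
The plan is to use the explicit ball solution from Lemma \ref{lem:solution ball} as a barrier from above, applied on the \emph{inscribed} ball $B_{d_\Ga(x)}(x)$ centered at $x$ itself, and then invoke the comparison principle of Lemma \ref{lem:elliptic comparison}.

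First, I would fix $x\in\Om$ and set $d=d_\Ga(x)>0$. A short connectedness argument shows that $B_d(x)\subset\Om$ and that some nearest point $z\in\Ga$ lies on $\partial B_d(x)$. Let $\tilde u^\ve_-$ denote the translate to $B_d(x)$ of the explicit radial solution supplied by \eqref{radial solution} on a ball of radius $R=d$ centered at the origin. Then $\tilde u^\ve_-$ solves the first equation in \eqref{elliptic-Pucci} inside $B_d(x)$ and equals $1$ on $\partial B_d(x)$. Since the maximum principle gives $u^\ve_-\le 1$ on $\partial B_d(x)$, Lemma \ref{lem:elliptic comparison} applied on $B_d(x)$ delivers $u^\ve_-(x)\le \tilde u^\ve_-(x)$.

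Next, I would evaluate $\tilde u^\ve_-$ at the center of the ball. From \eqref{radial solution} with $|x|=0$ and $R=d$,
\[
\tilde u^\ve_-(x)=\frac{\int_0^\pi (\sin\te)^{N-2}\,d\te}{\int_0^\pi e^{\frac{d}{\ve\sqrt{\la}}\cos\te}(\sin\te)^{N-2}\,d\te}.
\]
Factoring $e^{d/(\ve\sqrt{\la})}$ from the denominator via the identity $\cos\te = 1-(1-\cos\te)$, and then taking $\ve\log$ of the inequality $u^\ve_-(x)\le \tilde u^\ve_-(x)$, produces exactly the first line of \eqref{eq:barrier from above}. The bound for $u^\ve_+$ follows verbatim after replacing $\la$ by $\La$ and using the $\cM^+$ radial solution in Lemma \ref{lem:solution ball}. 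The boundary case $x\in\Ga$ is immediate since then $d_\Ga(x)=0$ and both sides of \eqref{eq:barrier from above} vanish.

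I do not foresee a real obstacle. The only conceptual choice is to take the inscribed ball centered at $x$ of radius $d_\Ga(x)$ (rather than, say, some exterior ball touching $\Ga$ at the nearest point); this choice is dictated by the form of the right-hand side of \eqref{eq:barrier from above}, where $d_\Ga(x)$ appears in the position occupied by $R$ in \eqref{radial solution} and the center of the ball must be the evaluation point. Once that choice is made, the rest of the argument is the comparison principle together with a routine algebraic rearrangement of the ratio in \eqref{radial solution}.
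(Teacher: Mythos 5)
Your proposal is correct and follows essentially the same route as the paper: compare $u^\ve_\pm$ on the inscribed ball $B_{d_\Ga(x)}(x)$ with the explicit radial solution of Lemma \ref{lem:solution ball} via Lemma \ref{lem:elliptic comparison}, then evaluate that solution at the center and rearrange the exponential factor. The paper's proof is exactly this argument, stated more tersely.
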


\begin{proof}
Since for any $x\in\Om$, $B_R(x)$ with $R=d_\Ga(x)$ is contained in $\Om$ then, if $\overline{u}$ is the solution of \eqref{elliptic-Pucci} for $B_R(x)$,  an application of the comparison principle gives that $u^\ve \leq \overline{u}$ on $\ol{B_R(x)}$, and hence at $x$.
Thus, \eqref{eq:barrier from above} follows at once from Lemma \ref{lem:solution ball}.
\end{proof}

\begin{lem}[Barriers from below]
\label{lem:barrier from below}
Assume that $u^\ve_\pm$ are 
the solutions of \eqref{elliptic-Pucci} and
take a point $z\in\RR^N \setminus\ol\Om$. Then, it holds that
\begin{equation}
\begin{aligned}
\label{eq:below elliptic}
&\ve \log u^\ve_-(x) + \frac{|x-z| - d_\Ga(z)}{\sqrt{\la}}\ge \\
&\qquad\ve \log \left[ \frac{\int_0^\infty e^{-\frac{|x-z|}{\ve}\,\frac{\cosh\te -1 }{\sqrt{\la}}}\left(\sinh\te\right)^{-1+(N-1)\La/\la} d\te}{\int_0^\infty e^{-\frac{d_\Ga(z)}{\ve}\,\frac{\cosh\te -1 }{\sqrt{\la}}}\left(\sinh\te\right)^{-1+(N-1)\La/\la} d\te} \right], \\
&\ve \log u^\ve_+(x) + \frac{|x-z| - d_\Ga(z)}{\sqrt{\La}}\ge \\
&\qquad\ve \log \left[ \frac{\int_0^\infty e^{-\frac{|x-z|}{\ve}\,\frac{\cosh\te -1 }{\sqrt{\La}}}\left(\sinh\te\right)^{-1+(N-1)\la/\La} d\te}{\int_0^\infty e^{-\frac{d_\Ga(z)}{\ve}\,\frac{\cosh\te -1 }{\sqrt{\La}}}\left(\sinh\te\right)^{-1+(N-1)\la/\La} d\te} \right],
\end{aligned}
\end{equation}
for any $x\in\ol\Om$.
\end{lem}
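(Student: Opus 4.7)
The approach is to exploit the fact that the explicit solutions to the exterior-ball problem provided by Lemma \ref{lem:exterior solution} serve as natural sub-solutions to \eqref{elliptic-Pucci} on $\Om$, once the ball is placed at the given point $z$. Concretely, the plan is to set $R=d_\Ga(z)$, let $\underline{u}_\pm$ be the solution of \eqref{elliptic-Pucci} on $\RR^N\setminus \ol{B_R(z)}$ from Lemma \ref{lem:exterior solution} (with $|x|$ replaced by $|x-z|$), compare $\underline{u}_\pm$ with $u^\ve_\pm$ on $\ol\Om$ via the comparison principle of Lemma \ref{lem:elliptic comparison}, and then rearrange the explicit expression before taking $\ve\log$.

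The first step is the geometric inclusion $\Om\subset \RR^N\setminus \ol{B_R(z)}$, which is needed to ensure $\underline{u}_\pm$ is defined and satisfies the Pucci equation on $\Om$. If one had a point $y\in\Om\cap \ol{B_R(z)}$, then the segment from $z$ (in the open set $\RR^N\setminus\ol\Om$) to $y$ (in the open set $\Om$) would have to cross $\Ga$ at some $w$; but then $|z-w|\le |z-y|\le R=d_\Ga(z)$, forcing $|z-w|=R$ and $w=y$, contradicting $y\in\Om$. Next comes the boundary inequality $\underline{u}_\pm\le u^\ve_\pm=1$ on $\Ga$: by Lemma \ref{lem:bessel} one has $f'\le 0$, so $\underline{u}_\pm$ is decreasing in $|x-z|$, and the bound $|x-z|\ge R$ valid for every $x\in\Ga$ immediately yields $\underline{u}_\pm\le 1$ there. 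Since both $\underline{u}_\pm$ and $u^\ve_\pm$ are continuous and bounded on $\ol\Om$, Lemma \ref{lem:elliptic comparison} applies and gives $\underline{u}_\pm\le u^\ve_\pm$ on $\ol\Om$.

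Finally, I would rearrange the explicit ratio defining $\underline{u}_-$ by factoring $e^{-|x-z|/(\sqrt{\la}\,\ve)}$ out of the numerator and $e^{-R/(\sqrt{\la}\,\ve)}$ out of the denominator (and likewise with $\sqrt{\La}$ for $\underline{u}_+$). This converts $\cosh\te$ into $\cosh\te-1$ inside both integrals and produces an exterior prefactor $e^{(R-|x-z|)/(\sqrt{\la}\,\ve)}$; after taking $\ve\log$ and recalling $R=d_\Ga(z)$, this prefactor becomes precisely the distance term on the left-hand side of \eqref{eq:below elliptic}, while the remaining quotient of integrals matches the right-hand side. No step is technically demanding: the only points requiring a little care are the geometric inclusion and the verification that $\underline{u}_\pm$, being a classical solution of the Pucci equation on its natural domain (hence on the smaller open set $\Om$), may legitimately play the role of sub-solution in the comparison step.
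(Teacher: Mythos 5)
Your proposal is correct and follows essentially the same route as the paper's proof: set $R=d_\Ga(z)$, observe $\Om\subset\RR^N\setminus\ol{B_R(z)}$, compare $u^\ve_\pm$ with the exterior-ball solutions of Lemma \ref{lem:exterior solution} via Lemma \ref{lem:elliptic comparison}, and rearrange the explicit ratio. The extra details you supply (the segment argument for the inclusion, the monotonicity from $f'\le 0$ for the boundary inequality, and the factoring of $e^{-|x-z|/(\sqrt{\la}\,\ve)}$ and $e^{-R/(\sqrt{\la}\,\ve)}$) are exactly the steps the paper leaves implicit, and they are all sound.
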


\begin{proof}
If we set $R=d_\Ga(z)$, we have that $\Om $ is contained in $\RR^N\setminus \ol{B_R(z)}$.
We now apply the comparison principle to $u^\ve_-$ and the solution $\underline{u}$ of \eqref{elliptic-Pucci} relative to the domain $\RR^N\setminus \ol{B_R(z)}$. We thus obtain that $u^\ve \ge \underline{u}$ on $\ol\Om$
and the first statement in \eqref{eq:below elliptic} clearly follows from Lemma \ref{lem:exterior solution}.
\end{proof}

\subsection{Asymptotics in general domains}
\label{ssec:elliptic varadhan formula}

The desired asymptotic formulas of Theorem \ref{th:elliptic-Pucci} are obtained from Lemmas \ref{lem:barrier from above} and \ref{lem:barrier from below}.


\begin{proof}[Proof of Theorem \ref{th:elliptic-Pucci}, part (i)]
If $x\in\Ga$, the formula follows from the fact that $\ve\log u^\ve(x)$ is constantly equal to zero.
For any $x\in \Om$ and $z\in \RR^N\setminus\ol\Om$, by using both estimates in \eqref{eq:barrier from above} and \eqref{eq:below elliptic}, we have that
\begin{multline}
\label{eq:double estimate}
\ve \log\left[ \frac{\int_{0}^{\pi} \left(\sin\te\right)^{N-2} d\te}{\int_{0}^{\pi} e^{-\frac{d_\Ga(x)}{\ve}\frac{1-\cos\te}{\sqrt{\la}}}\left(\sin\te\right)^{N-2} d\te } \right] \ge  \ve \log u^\ve_-(x) +\frac{d_\Ga(x)}{\sqrt{\la}} \ge  \\
 \frac{d_\Ga(x)-|x-z| + d_\Ga(z)}{\sqrt{\la}} +\ve \log \left[ \frac{\int_0^\infty e^{-\frac{|x-z|}{\ve}\,\frac{\cosh\te -1 }{\sqrt{\la}}}\left(\sinh\te\right)^{-1+(N-1)\La/\la} d\te}{\int_0^\infty e^{-\frac{d_\Ga(z)}{\ve}\,\frac{\cosh\te -1 }{\sqrt{\la}}}\left(\sinh\te\right)^{-1+(N-1)\La/\la} d\te} \right].
\end{multline}
By letting $\ve \to 0^+$, we then obtain that 
\begin{multline*}
 \frac1{\sqrt{\la}} \{d_\Ga(x)-|x-z| + d_\Ga(z)\} \le \\ 
\liminf_{\ve\to 0^+} \left\{ \ve \log u^\ve_-(x) +\frac{d_\Ga(x)}{\sqrt{\la}} \right\} \le 
 \limsup_{\ve\to 0^+}  \left\{ \ve \log u^\ve_-(x) +\frac{d_\Ga(x)}{\sqrt{\la}} \right\} \leq 0.
\end{multline*}
Now, we take $y\in\Ga$ such that $|y-x|=d_\Ga(x)$ and we let $z\to y$. We obtain that both $d_\Ga(z)$ and $d_\Ga(x) - |x-z|$ vanish, and hence we conclude the proof.
\par
For the case of $u^\ve_+$, we proceed similarly.
\end{proof}

In order to obtain uniform estimates, we detail the definition of domain of class $C^{0,\om}$ outlined in the introduction.
Let $\om:(0,\infty)\to(0,\infty)$ be a strictly increasing continuous function such that $\om(0^+)=0$. We say that a domain $\Om$ is of class $C^{0,\om}$, if there exists a number $r>0$ such that, at every point $x_0\in\Ga$, there is a coordinate system $(y',y_N)\in\RR^{N-1}\times\RR$, and a function $\zi:\RR^{N-1}\to\RR$ such that
\begin{enumerate}[(i)]
\item
$B_r(x_0)\cap\Om=\{(y',y_N)\in B_r(x_0):y_N<\zi(y')\}$, 

\item
$B_r(x_0)\cap\Ga=\{(y',y_N)\in B_r(x_0):y_N=\zi(y')\}$, 

\item
$|\zi(y')-\zi(z')|\le\om(|y'-z'|)$ for all $(y',\zi(y')), (z',\zi(z'))\in B_r(x_0)\cap\Ga$.
\end{enumerate}
We then let $\psi_\om:[0,\infty)\to [0,\infty)$ be the function defined by
\begin{equation}
\label{psi function}
\psi_\om(\si) = \inf_{ s\geq 0} \sqrt{s^2 + [\om(s) -\si]^2} \ \mbox{ for } \ \si\geq 0.
\end{equation}


\begin{proof}[Proof of Theorem \ref{th:elliptic-Pucci}, part (ii)]
We fix a compact subset $K$ of $\ol\Om$ and we set 
$$
d=\max_K d_\Ga.
$$ 
\par
To obtain the uniform convergence in \eqref{eq:double estimate} we will choose $z=z_\ve$ independently on $x\in K$, as follows.
Since $\Om$ is of class $C^{0,\om}$, for a fixed $x\in K$, we take $y\in\Ga$ minimizing the distance to $x$, and 
consider a coordinate system in $\RR^{N-1}\times\RR$ such that $y=(0', 0)$. If we take $z_\ve=(0',\ve)$, then $z_\ve\in\RR^N\setminus\ol\Om$ when $\ve\le d$ is sufficiently small. 
Also, we have that $|x-z_\ve|\leq d_\Ga(x)+\ve \leq 2 d$ and that $d_\Ga(z_\ve) \geq \psi_\om(\ve)$.
\par
Hence, \eqref{eq:double estimate} reads as
\begin{multline*}
 -\frac{\ve}{\sqrt{\la}}+ \ve \log \left[ \frac{\int_{0}^{\infty} e^{-2d\,\frac{\cosh\te -1 }{\ve\sqrt{\la}}}\left(\sinh\te\right)^{-1+(N-1)\,\La/\la} d\te}{\int_{0}^{\infty} e^{-\frac{\cosh\te -1 }{\ve\sqrt{\la}}\psi_\om(\ve)}\left(\sinh\te\right)^{-1+(N-1)\,\La/\la} d\te} \right] \le\\
\ve \log u^\ve_-(x) +\frac{d_\Ga(x)}{\sqrt{\la}} \leq  \ve \log\left[ \frac{\int_{0}^{\pi} \left(\sin\te\right)^{N-2}\,d\te}{\int_{0}^{\pi} e^{-\frac{d}{\ve\sqrt{\la}}(1-\cos\te)}\left(\sin\te\right)^{N-2}\,d\te } \right].
\end{multline*}
A similar chain of inequalities can be obtained for $u^\ve_+$, by simply switching the roles of $\la$ and $\La$. In any case, from \eqref{eq:asymptotics-to-infinity-I} in Lemma \ref{lem:bessel}, we easily infer that the last term is $O\left(\ve\log \ve\right)$ as $\ve \to 0^+$. 
\par 
Next, to take care of the first term, we consider the quantity 
$$
\eta_\ve(\la,\La)=
\frac{\int_{0}^{\infty} e^{-2d\,\frac{\cosh\te -1 }{\ve\sqrt{\la}}}\left(\sinh\te\right)^{-1+(N-1)\,\La/\la} d\te}{\int_{0}^{\infty} e^{-\frac{\cosh\te -1 }{\ve\sqrt{\la}}\psi_\om(\ve)}\left(\sinh\te\right)^{-1+(N-1)\,\La/\la} d\te},
$$
for the case of $u^\ve_-$, and apply \eqref{eq:asymptotics-to-infinity} with $\si=2d/(\ve\sqrt{\la})$ at the numerator and \eqref{eq:asymptotics-to-zero} with $\si=\psi_\om(\ve)/(\ve\sqrt{\la})$ at the denominator. In both cases we set $a=1$ and $b=-1+(N-1)\,\La/\la=N-2+(N-1)(\La/\la-1)\ge 0$.
Notice that $b =0$ if and only if $N=2$ and $\la=\La$. Hence, if $N\neq 2$ or $\la \neq \La$, from  \eqref{eq:asymptotics-to-zero} with $b>0$ we see that 
$$
\eta_\ve(\la,\La)=\frac{\Ga\left(\frac{b+1}{2}\right)}{2\Ga\left(b\right) d^\frac{b+1}{2} \la^\frac{b-1}{4}}\,\ve^\frac{1-b}{2}\,\psi_\om(\ve)^b \left\{1+o(1)\right\}
 \ \mbox{ as } \ \ve\to 0^+,
$$
which gives that
$$
\ve\log\eta_\ve(\la,\La)=O\left(\ve \log \psi_\om(\ve)\right) \ \mbox{ as } \ \ve\to 0^+.
$$
If $N=2$ and $\la=\La$, we apply \eqref{eq:asymptotics-to-zero} with $b=0$ and hence we infer that
$$
\eta_\ve(\la,\La)= \frac{2^{-\frac{1}{2}} \Ga\left(\frac{1}{2}\right) \left( \frac{2d}{\ve\sqrt{\la}}\right)^{-\frac{1}{2}}\left\{1+o(1)\right\}}{\log\left(\frac{\ve\sqrt{\la}}{\psi_\om(\ve)}\right) +  O(1)} \ \mbox{ as } \ \ve\to 0^+,
 $$
which gives that
$\ve\log\eta_\ve(\la,\La)= O\left(\ve\log |\log \psi_\om(\ve)|\right)$
 as $\ve\to 0^+$.
\par
In the case of $u^\ve_+$, we must work with $\eta_\ve(\La,\la)$, instead. In particular, we must choose $b=-1+ (N-1)\,\la/\La$, and this means that $b$ may be positive, zero, and also negative. The cases $b\ge 0$, that correspond to the range $\La\le (N-1)\,\la$, can be settled by arguing as above. 
\par
It only remains to settle the case in which $-1<b<0$, that occurs when $\La > (N-1)\,\la$. We can still apply Lemma \ref{lem:bessel} and obtain that
$$
\eta_\ve(\La,\la)= \frac{\sin\left(\frac{b\pi}{2}\right)\Ga(1+b/2)\la^\frac{b+1}{4}}{\sqrt{\pi}\,d^\frac{b+1}{2}}\,\ve^\frac{b+1}{2}\,\left\{1+o(1)\right\}
\ \mbox{ as } \ \ve\to 0^+,
$$
which gives that 
$\ve\log \eta_\ve(\La,\la)=O\left(\ve \log\ve\right)$ as $\ve\to 0^+$.
\end{proof}

\begin{rem}
{\rm
Observe that, if we set $N^+_{\la,\La}=1+(N-1)\,\la/\La$, part (ii) of Theorem \ref{th:elliptic-Pucci} can be rephrased as follows:
\begin{equation*}
\ve \log u^\ve + \frac{d_\Ga}{\sqrt{\La}} =
\begin{cases}
O\left(\ve \log \ve\right) \ &\mbox{ if } \ N^+_{\la,\La}<2,\\
O\left(\ve \log |\log\psi_\om(\ve)|\right) \ &\mbox{ if } \ N^+_{\la,\La}=2,\\
O\left( \ve \log\psi_\om(\ve)\right) \ &\mbox{ if } \ N^+_{\la,\La}>2.\\
\end{cases} 
\end{equation*}
It is worth noting that $N^+_{\la,\La}$ comes into play to determine the threshold for the existence of non-trivial solutions of the problem 
$$
\cM^+\left(\na^2 u\right) + u^p = 0 \ \mbox{ in } \ \RR^N, \quad u\geq 0 \ \mbox{ in } \ \RR^N,
$$
as shown in \cite{Cutri2000, Felmer2003}.
}
\end{rem}


%


\section{Small time asymptotics in general domains} 

In this section, we consider the solution $v=v(x,t)$ of  the initial-boundary value problem \eqref{parabolic-Pucci}:
$$
v_t- \cM^\pm(\na^2 v)=0 \ \mbox{ in } \ \Om\times (0,\infty),  \ v=0 \ \mbox{ on } \ \Om\times \{ 0\},  \ v=1 \ \mbox{ on } \ \Ga\times (0,\infty).
$$

The following parabolic comparison principle is a corollary of \cite[Theorem 2.1]{GGIS} (see also \cite[Theorem A.1]{BM-JMPA}). We stress the fact that the relevant functions need not be totally bounded.

\begin{lem}
\label{lem:parabolic comparison}
Let $u(x,t)$ and $w(x,t)$ be a sub-solution and a super-solution in the viscosity sense of the first equation in \eqref{parabolic-Pucci}. Assume that $u$ and $w$ are continuous. Moreover, suppose that $u$ and $-w$ are bounded from above.
\par
Suppose that $u\leq w$ on $\left(\Ga\times (0,\infty)\right)\cup \left(\Om \times \{0\}\right)$. Then, $u\leq w$ on $\ol\Om\times (0,\infty)$.
\end{lem}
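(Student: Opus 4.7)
The plan is to deduce the statement from the general parabolic comparison principle of \cite[Theorem 2.1]{GGIS}, treating the possibly unbounded $\Om$ and the one-sided bounds on $u$ and $w$ by a penalization at spatial infinity. Since $-\cM^\pm$ are uniformly elliptic (hence degenerate elliptic in the viscosity sense), independent of $(x,t)$, and continuous in the matrix variable, the structural hypotheses of \cite[Theorem 2.1]{GGIS} are met automatically; what requires care is only the growth/coercivity compatibility between $u$, $w$, and the operator.

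Concretely, for $\eta>0$ I would introduce the perturbed family $w^\eta(x,t) := w(x,t) + \eta\,\phi(x,t)$, where
$$
\phi(x,t) = A\,t + B\,(1+|x|^2), \qquad B>0, \quad A > 2NB\La.
$$
A direct computation gives $\phi_t = A$ and $\na^2\phi = 2B\,I$; since $\cM^\pm$ applied to a positive multiple of $I$ reduces to $2NB\La$ (resp.\ $2NB\la$), one obtains
$$
\phi_t - \cM^\pm(\na^2\phi) \;\ge\; A - 2NB\La \;>\; 0 \quad \text{on} \quad \RR^N\times(0,\infty).
$$
Hence $w^\eta$ is a strict classical (and \emph{a fortiori} viscosity) supersolution, $w^\eta \ge w \ge u$ on $(\Ga\times(0,\infty))\cup(\Om\times\{0\})$, and, crucially, the combination of the one-sided bounds on $u$ and $-w$ with the spatial coercivity of $\phi$ forces $u - w^\eta \to -\infty$ as $|x|\to\infty$ uniformly on every strip $\ol\Om\times[0,T]$.

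This coercivity forces the supremum of $u - w^\eta$ over $\ol\Om\times[0,T]$ to be attained at some point. If it is attained on the parabolic boundary $(\Ga\times(0,T])\cup(\Om\times\{0\})$, it is nonpositive and there is nothing to prove; otherwise, applying the parabolic doubling-of-variables / Ishii--Jensen machinery in the form packaged by \cite[Theorem 2.1]{GGIS} at the interior maximizer $(x_0,t_0)$, and using that $w^\eta$ is a \emph{strict} supersolution, yields the contradiction $0 < \phi_t(x_0,t_0) - \cM^\pm(\na^2\phi(x_0,t_0)) \le 0$. Sending $\eta\to 0^+$ then gives $u \le w$ on $\ol\Om\times[0,T]$, and since $T>0$ is arbitrary the conclusion follows, along the same scheme as \cite[Theorem A.1]{BM-JMPA}. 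The main obstacle is not the abstract contradiction step, which is by now standard in viscosity theory, but the design of $\phi$: it has to be a strict supersolution globally on $\RR^N\times(0,\infty)$ with no dependence on any unknown lower bound of $w$, while being coercive enough in $|x|$ to compensate for the merely one-sided control on $u$ and $w$; the quadratic-plus-linear choice above succeeds precisely because $\cM^\pm$ evaluated on a constant positive multiple of $I$ collapses to a constant, which the term $A\,t$ can outpace.
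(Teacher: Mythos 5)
Your argument is correct in outline, but it takes a genuinely different route from the paper: the paper gives no proof at all, since the hypotheses of the lemma (continuity, $u$ and $-w$ bounded from above, ordering on the parabolic boundary, an operator independent of $(x,t,u)$ and uniformly elliptic) match verbatim those of the comparison principle in \cite[Theorem 2.1]{GGIS}, so the lemma is quoted as an immediate corollary (see also \cite[Theorem A.1]{BM-JMPA}); what you do instead is reconstruct that comparison principle by penalization at spatial infinity, which is the standard proof scheme and is closer in spirit to the argument of \cite[Theorem A.1]{BM-JMPA}. Your penalizer works: with $\phi(x,t)=At+B(1+|x|^2)$ and $A>2NB\La$, the function $w+\eta\phi$ is a strict supersolution for both $\cM^-$ and $\cM^+$, and the quadratic growth restores compactness for the maximization of $u-w-\eta\phi$ on $\ol\Om\times[0,T]$, while on the parabolic boundary $u-w-\eta\phi\le 0$ since $\phi\ge 0$. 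Two points in your write-up should be repaired, though neither is fatal. First, $w^\eta$ is not a classical supersolution ($w$ is merely continuous); what is true is that $\phi$ is a smooth strict supersolution and that the strictness transfers to $w+\eta\phi$ in the viscosity sense via the structural inequality $\cM^\pm(X+Y)\le \cM^\pm(X)+\cM^+(Y)$ --- this is exactly why the threshold must involve $\La$ (i.e.\ $A>2NB\La$) even for $\cM^-$, whose value on $2BI$ is only $2NB\la$, so your constant is the right one but for a reason your ``classical'' phrasing hides. Second, the contradiction at an interior maximizer cannot be obtained by ``applying \cite[Theorem 2.1]{GGIS} at the maximizer'': that theorem is itself the global comparison statement, not a pointwise lemma; the tool needed there is the parabolic theorem on sums (Jensen--Ishii, e.g.\ \cite[Theorem 8.3]{CIL}), which after doubling the space variable produces semijet elements with a common first-order part and ordered second-order matrices $X\le Y$, whence degenerate ellipticity of $\cM^\pm$ and the strict supersolution property give $0\ge \eta\,(A-2NB\La)>0$; your displayed inequality $0<\phi_t-\cM^\pm(\na^2\phi)\le 0$ is only a shorthand for this. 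With these adjustments your penalization proof is a sound, essentially self-contained alternative to the paper's direct citation; what the paper's route buys is brevity, while yours makes explicit why the one-sided bounds on $u$ and $-w$ and the unboundedness of $\Om$ cause no trouble.
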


\subsection{Barriers from above and below}
\label{ssec:barriers}

The next lemma provides barriers from above for $v^\pm$, based essentially on estimating $v^\pm$ in balls by means of $u^\ve_\pm$. In \cite{BM-JMPA} the analog in the case of the game-theoretic $p$-laplacian was obtained by employing a version of the {\it Laplace transform} (see \cite[Lemma 2.8 and formula (2.17)]{BM-JMPA}). We need here a different proof which relies on an application of Lemma \ref{lem:parabolic comparison}.

\begin{lem}[Parabolic barriers from above]
\label{lem:parabolic from above}
Assume that $v^{\pm}$ satisfies \eqref{parabolic-Pucci}.
\par
For any $(x,t) \in \ol\Om\times (0,\infty)$ it holds that
\begin{equation}
\label{eq:parabolic barrier from above}
\begin{aligned}
&4 t \log v^-(x,t) \leq -\frac{d_\Ga(x)^2}{\la} + 4 t \log \left[ \frac{\int_{0}^{\pi} \left(\sin\te\right)^{N-2} d\te}{\int_{0}^{\pi} e^{-\frac{d_\Ga(x)^2}{2\la t}(1-\cos\te)}\left(\sin\te\right)^{N-2} d\te} \right],\\
&4 t \log v^+(x,t) \leq -\frac{d_\Ga(x)^2}{\La} + 4 t \log \left[ \frac{\int_{0}^{\pi} \left(\sin\te\right)^{N-2} d\te}{\int_{0}^{\pi} e^{-\frac{d_\Ga(x)^2}{2\La t}(1-\cos\te)}\left(\sin\te\right)^{N-2} d\te} \right].
\end{aligned}
\end{equation}
\end{lem}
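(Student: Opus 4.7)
The proof plan is to transfer the elliptic upper barrier of Lemma~\ref{lem:barrier from above} into a parabolic upper bound by exploiting the positive $1$-homogeneity of $\cM^\pm$, and then to optimize in the free parameter $\ve>0$. For each $\ve>0$, I would define
$$
V^\ve(y,s):=e^{s/\ve^2}\,u^\ve_\pm(y),\qquad (y,s)\in\ol\Om\times[0,\infty),
$$
and verify that $V^\ve$ is a viscosity solution of $V^\ve_s-\cM^\pm(\na^2 V^\ve)=0$ in $\Om\times(0,\infty)$. Indeed, by the positive $1$-homogeneity of $\cM^\pm$ and the elliptic equation in \eqref{elliptic-Pucci},
$$
\cM^\pm(\na^2 V^\ve)=e^{s/\ve^2}\,\cM^\pm(\na^2 u^\ve_\pm)=e^{s/\ve^2}\,u^\ve_\pm/\ve^2=V^\ve_s;
$$
the passage to the viscosity sense is routine, since the time factor $e^{s/\ve^2}$ is smooth and positive, so any smooth test function touching $V^\ve$ from above (below) at a space-time point yields, after division by this factor, a test function touching $u^\ve_\pm$ from above (below) at the spatial point.

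The next step is a comparison of $V^\ve$ with $v^\pm$ on the parabolic boundary. One has $V^\ve(y,0)=u^\ve_\pm(y)\ge 0=v^\pm(y,0)$ for $y\in\Om$ and $V^\ve(y,s)=e^{s/\ve^2}\ge 1=v^\pm(y,s)$ on $\Ga\times(0,\infty)$; moreover $v^\pm$ and $-V^\ve$ are both bounded from above. Hence Lemma~\ref{lem:parabolic comparison} gives
$$
v^\pm(x,t)\le e^{t/\ve^2}\,u^\ve_\pm(x)\qquad\text{for every }(x,t)\in\ol\Om\times(0,\infty)\text{ and every }\ve>0.
$$
Combining this bound with Lemma~\ref{lem:barrier from above}, one obtains, in the minus case,
$$
v^-(x,t)\le \exp\!\left(\frac{t}{\ve^2}-\frac{d_\Ga(x)}{\ve\sqrt{\la}}\right)\cdot \frac{\int_0^\pi (\sin\te)^{N-2}\,d\te}{\int_0^\pi e^{-\frac{d_\Ga(x)}{\ve\sqrt{\la}}(1-\cos\te)}(\sin\te)^{N-2}\,d\te},
$$
and an analogous estimate for $v^+$ with $\La$ in place of $\la$.

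The last step is a one-variable minimization. Setting the derivative of $\ve\mapsto t/\ve^2-d_\Ga(x)/(\ve\sqrt{\la})$ to zero yields the critical value $\ve^\star=2t\sqrt{\la}/d_\Ga(x)$, at which the exponent equals $-d_\Ga(x)^2/(4\la t)$ and $d_\Ga(x)/(\ve^\star\sqrt{\la})=d_\Ga(x)^2/(2\la t)$. Substituting $\ve=\ve^\star$ and taking $4t\log$ of both sides produces precisely the first inequality in \eqref{eq:parabolic barrier from above}; the case $x\in\Ga$ is trivial since both sides vanish. The proof for $v^+$ is identical with $\La$ replacing $\la$ and $\ve^\star=2t\sqrt{\La}/d_\Ga(x)$. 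I expect the only delicate point to be the viscosity-sense verification in the first step---checking rigorously that multiplication by a smooth positive time factor preserves the viscosity property under the $1$-homogeneous nonlinear operator $\cM^\pm$---while the remaining algebra is engineered precisely so that $\ve^\star$ aligns both the exponential and the integral ratio in the elliptic barrier with the claimed parabolic right-hand side.
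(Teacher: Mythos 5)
Your proposal is correct and follows essentially the paper's own argument: both exploit the positive $1$-homogeneity of $\cM^\pm$ to turn $e^{t/\ve^2}u^\ve_\pm$ into a (super)solution of the parabolic equation, apply Lemma \ref{lem:parabolic comparison}, and then optimize with the choice $\ve=2\sqrt{\la}\,t/d_\Ga(x)$ (resp.\ $\ve=2\sqrt{\La}\,t/d_\Ga(x)$). The only cosmetic difference is that the paper runs the comparison in the ball $B_{d_\Ga(x)}(x)$ with the explicit radial solution \eqref{radial solution}, whereas you compare in all of $\Om$ and then invoke Lemma \ref{lem:barrier from above} (itself derived from that same ball solution); this costs you the---routine but genuinely needed---viscosity-sense verification for $e^{s/\ve^2}u^\ve_\pm$, since $u^\ve_\pm$ in a general $\Om$ is only a viscosity solution, while the paper's ball solution is explicit and classical.
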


\begin{proof}
We prove \eqref{eq:parabolic barrier from above} for the case $v^-$. With obvious adjustments, the proof for $v^+$ runs similarly. First, we observe that if $x\in\Ga$ then \eqref{eq:parabolic barrier from above} is trivially satisfied. If $x\in\Om$ we argue as follows. Consider the ball $B\subset \Om$ centered at $x$ with radius $d_\Ga(x)$. Fix the parameter $\ve>0$. Define, for $x'\in \ol{B}$ and $t>0$,
$$
w(x',t)=e^{t/\ve^2} u_-^\ve(x'),
$$
where $u_-^\ve$ is the solution of \eqref{elliptic-Pucci} in $B$. Since $\cM^-$ is positively one-homogeneous and $u_-^\ve$ solves \eqref{elliptic-Pucci}, it is a plain inspection to verify that
\begin{eqnarray*}
&w_t - \cM^-\left(\na^2 w\right)=0 \ &\mbox{ in } \ B\times(0,\infty),\\
&w = e^\frac{t}{\ve^2} > 1 \ &\mbox{ on } \ \pa B\times(0,\infty),\\
&w=u^\ve >0 \ &\mbox{ on } \ B\times\{0\}.
\end{eqnarray*}
\par
Lemma \ref{lem:parabolic comparison} gives that $v^-\leq w$ in $\ol{B}\times(0,\infty)$ and in particular at the center of $B$, that is $v^-(x,t)\leq w(x,t)$, for any $t>0$. By recalling \eqref{radial solution}, we obtain that, for any $t,\ve>0$, 
$$
4t \log v^-(x,t)\leq 4t \left(\frac{t}{\ve^2}-\frac{d_\Ga(x)}{\ve\sqrt{\la}}\right) + 4 t\log \left[ \frac{\int_{0}^{\pi} \left(\sin\te\right)^{N-2} d\te}{\int_{0}^{\pi} e^{-\frac{d_\Ga(x)}{\ve \sqrt{\la}}(1-\cos\te)}\left(\sin\te\right)^{N-2} d\te} \right].
$$
Hence, after choosing $\ve = \frac{2\sqrt{\la}}{d_\Ga(x)} t $, we get \eqref{eq:parabolic barrier from above} for $v^-$. 
\end{proof}

In the next lemma we present global sub-solutions for the differential equation in \eqref{parabolic-Pucci} that are instrumental to construct barriers from below for $v^\pm$.

\begin{lem}
\label{fundamental solution}
The functions $\Phi^\pm$, defined for any $x\in\RR^N$ and $t>0$ by
\begin{equation}
\label{global solutions}
\Phi^-(x,t) = t^{-\frac{N\La}{2\la}} e^{-\frac{|x|^2}{4\la t}} \ \mbox{ and } \ \Phi^+(x,t) = t^{-\frac1{2}-\frac{(N-1)\la}{2\La}}  e^{-\frac{|x|^2}{4\La t}},
\end{equation}
satisfy
\begin{equation}
\label{global equations}
\begin{aligned}
&&\Phi^-_t - \cM^-\left(\na^2 \Phi^-\right) \leq  0 \ \mbox{ in } \ \RR^N\times(0,\infty),\\
&&\Phi^+_t - \cM^+\left(\na^2 \Phi^+\right) \leq 0 \ \mbox{ in } \ \RR^N\times(0,\infty).
\end{aligned}
\end{equation}
\end{lem}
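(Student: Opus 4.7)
Since both $\Phi^{-}$ and $\Phi^{+}$ are of class $C^{\infty}$ on $\RR^{N}\times(0,\infty)$, the inequalities \eqref{global equations} may be checked in the classical pointwise sense, which automatically implies the viscosity sense. Both functions being radial in $x$, the plan is to apply the radial expressions \eqref{radial pucci minus}--\eqref{radial pucci plus} with $r=|x|$. The qualitative fact that drives the verification is that $\Phi^{\pm}_{r}\le 0$, so the angular piece of $\cM^{\pm}$ has a fixed sign, whereas $\Phi^{\pm}_{rr}$ changes sign on a parabola in the $(r,t)$ plane, which forces a case split on the sign of that second derivative.

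For $\Phi^{-}$, direct differentiation yields
$$
\Phi^{-}_{r}=-\frac{r}{2\la t}\,\Phi^{-},\qquad \Phi^{-}_{rr}=\Phi^{-}\!\left(-\frac{1}{2\la t}+\frac{r^{2}}{4\la^{2}t^{2}}\right),\qquad \Phi^{-}_{t}=\Phi^{-}\!\left(-\frac{N\La}{2\la t}+\frac{r^{2}}{4\la t^{2}}\right).
$$
Since $\Phi^{-}_{r}\le 0$, one has $\be(\Phi^{-}_{r})=\La\Phi^{-}_{r}$, and hence $\tfrac{N-1}{r}\be(\Phi^{-}_{r})=-\tfrac{(N-1)\La}{2\la t}\Phi^{-}$. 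Splitting according to whether $r^{2}\le 2\la t$ (so $\Phi^{-}_{rr}\le 0$ and $\be(\Phi^{-}_{rr})=\La\Phi^{-}_{rr}$) or $r^{2}\ge 2\la t$ (so $\be(\Phi^{-}_{rr})=\la\Phi^{-}_{rr}$), the two regimes should collapse respectively to
$$
\Phi^{-}_{t}-\cM^{-}(\na^{2}\Phi^{-})=\Phi^{-}\cdot\frac{r^{2}(\la-\La)}{4\la^{2}t^{2}}\ \text{ and }\ \Phi^{-}_{t}-\cM^{-}(\na^{2}\Phi^{-})=\Phi^{-}\cdot\frac{\la-\La}{2\la t},
$$
both non--positive because $\la\le\La$.

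The computation for $\Phi^{+}$ is parallel but uses the tuned exponent $\alpha=\tfrac{1}{2}+\tfrac{(N-1)\la}{2\La}$. Since $\Phi^{+}_{r}\le 0$, one now has $\ga(\Phi^{+}_{r})=\la\Phi^{+}_{r}$, and the case split occurs at $r^{2}=2\La t$. In the regime $r^{2}\ge 2\La t$ the choice of $\alpha$ is such that all terms cancel and $\Phi^{+}$ is in fact an exact solution there; in the opposite regime $r^{2}\le 2\La t$ the expression reduces to
$$
\Phi^{+}_{t}-\cM^{+}(\na^{2}\Phi^{+})=\Phi^{+}\cdot\frac{\La-\la}{2\La t}\,\Bigl(\frac{r^{2}}{2\La t}-1\Bigr)\le 0,
$$
the bracket being non--positive by the case assumption.

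There is no genuine obstacle here beyond careful bookkeeping: the exponents $\tfrac{N\La}{2\la}$ and $\tfrac{1}{2}+\tfrac{(N-1)\la}{2\La}$ are tuned precisely so that the leading $1/t$ terms cancel in the regime where $\be$, respectively $\ga$, selects the ``bad'' slope, while the elementary inequality $\la\le\La$ handles the residual term in the opposite regime.
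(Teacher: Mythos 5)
Your proof is correct and follows essentially the same route as the paper: a direct pointwise computation via the radial expressions \eqref{radial pucci minus}--\eqref{radial pucci plus}, using $\Phi^\pm_r\le 0$ to fix the angular term and splitting on the sign of $\Phi^\pm_{rr}$ at $|x|^2=2\la t$ (resp. $2\La t$), with the same resulting expressions (your factor $4\la^2t^2$ in the inner regime for $\Phi^-$ is in fact the correct one; the paper prints $4\la^2 t$, an inconsequential typo).
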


\begin{proof}
Inequalities \eqref{global equations} follow from direct computations by taking into consideration \eqref{radial pucci minus}-\eqref{radial pucci plus}. Indeed, for $\Phi^-$ we get that
\begin{equation*}
\Phi_t^-(x,t) - \cM^-\left(\na^2 \Phi^-(x,t)\right)=
\begin{cases}
\displaystyle
\frac{\la-\La}{2\la t}\Phi^-(x,t) \ &\mbox{ if } \ |x|^2 \geq 2\la t\\[10pt]
\displaystyle
\frac{|x|^2(\la-\La)}{4\la^2 t}\Phi^-(x,t)\ &\mbox{ elsewhere.}
\end{cases}
\end{equation*}
Analogously, for $\Phi^+$ we get that
\begin{equation*}
\Phi_t^+(x,t) - \cM^+\left(\na^2 \Phi^+(x,t)\right)=
\begin{cases}
\displaystyle
0\ &\mbox{ if } \ |x|^2 \geq 2\La t\\[10pt]
\displaystyle
\frac{\La - \la}{4\La^2 t^2}\left(|x|^2 - 2\La t\right)\Phi^+(x,t) \ &\mbox{ elsewhere.}
\end{cases}
\end{equation*}
The claim then follows by an inspection.
\end{proof}


\begin{lem}[Parabolic barriers from below]
\label{lem:parabolic from below}
Assume that $v^\pm$ is the solution of \eqref{parabolic-Pucci}. Let $z\in \RR^N\setminus \ol\Om$ and $\de = d_\Ga(z)$.
\par
Then, we have that
\begin{equation}
\label{eq:parabolic barrier from below}
\begin{aligned}
& v^-(x,t) \geq A_-  t^{-\frac{N\La}{2\la}}  e^{-\frac{|x-z|^2}{4\la t}} \ \mbox{ in } \ \ol\Om\times \left(0,\infty\right),\\
& v^+(x,t) \geq A_+ t^{-\frac1{2}-\frac{(N-1)\la}{2\La}}  e^{-\frac{|x-z|^2}{4\La t}} \ \mbox{ in } \ \ol\Om\times(0,\infty),
\end{aligned}
\end{equation}
where
\begin{equation}
A_-=
\left\{
\frac{\de^2 e}{2N\La}\right\}^{\frac{N\La}{2\la}}, \qquad  A_+=
\left\{
\frac{\de^2 e}{2\left[(N-1) \la+\La\right]}\right\}^{\frac{(N-1)\la + \La}{2\La}}.
\end{equation}
\end{lem}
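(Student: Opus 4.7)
The plan is to use the global sub-solutions $\Phi^\pm$ from Lemma \ref{fundamental solution} as barriers, after translating the spatial variable to center them at $z$ and scaling by a positive constant chosen just large enough to remain below $v^\pm$ on the parabolic boundary. More precisely, I would set
$$
\Psi^\pm(x,t) = A_\pm\,\Phi^\pm(x-z,t) \quad \text{for } (x,t)\in\ol\Om\times(0,\infty),
$$
and then verify three items: (a) $\Psi^\pm$ is a viscosity sub-solution of the first equation in \eqref{parabolic-Pucci} on $\Om\times(0,\infty)$; (b) $\Psi^\pm\le v^\pm$ on the parabolic boundary $\bigl(\Ga\times(0,\infty)\bigr)\cup\bigl(\Om\times\{0\}\bigr)$; (c) the boundedness requirements of Lemma \ref{lem:parabolic comparison} are met. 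Once these are in place, Lemma \ref{lem:parabolic comparison} yields $v^\pm\ge\Psi^\pm$ on $\ol\Om\times(0,\infty)$, which is exactly \eqref{eq:parabolic barrier from below}.

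Item (a) is immediate from Lemma \ref{fundamental solution}: spatial translation preserves the equation, and positive scalar multiplication preserves the sub-solution inequality because $\cM^\pm$ is positively $1$-homogeneous. Item (c) holds because, for $x\in\ol\Om$, one has $|x-z|\ge\de>0$, so $\Phi^\pm(x-z,t)$ is continuous on $\ol\Om\times(0,\infty)$, vanishes as $t\to 0^+$ and as $t\to\infty$, and is therefore bounded from above; the bound on $-v^\pm$ follows from the maximum principle applied to \eqref{parabolic-Pucci}, which forces $0\le v^\pm\le 1$.

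The substantive point is item (b). The initial-condition part is trivial: for $x\in\Om$, $|x-z|\ge\de>0$ forces $\Psi^\pm(x,t)\to 0$ as $t\to 0^+$. For the lateral part, on $\Ga\times(0,\infty)$ we have $|x-z|\ge\de$, so monotonicity of $s\mapsto e^{-s^2/(4\la t)}$ in $s$ reduces the desired inequality $\Psi^-\le 1$ to
$$
A_-\,t^{-N\La/(2\la)}\,e^{-\de^2/(4\la t)} \le 1 \quad \text{for all } t>0.
$$
Here the constant $A_-$ is forced by the one-variable optimization: the function $t\mapsto t^{-\al}e^{-c/t}$ with $\al=N\La/(2\la)$ and $c=\de^2/(4\la)$ attains its global maximum $(\al/c)^\al e^{-\al}$ at $t=c/\al$, so the sharp choice is
$$
A_- = \Bigl(\tfrac{c}{\al}\Bigr)^\al e^\al = \Bigl(\tfrac{\de^2\,e}{2N\La}\Bigr)^{N\La/(2\la)},
$$
matching the statement. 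The identical argument with $\al=\tfrac12+\tfrac{(N-1)\la}{2\La}$ and $c=\de^2/(4\La)$ pins down $A_+$.

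The main obstacle is really just the bookkeeping of the exponents so that the maximum of $t^{-\al}e^{-c/t}$ is exactly absorbed by the prefactor $A_\pm$; there is no conceptual difficulty once Lemma \ref{fundamental solution} and Lemma \ref{lem:parabolic comparison} are available, but one must be careful to use $|x-z|\ge\de$ (not $d_\Ga(x)$) on $\Ga$ and to keep the exponents on $t$ straight in the plus and minus cases, since for $\Phi^+$ the power $\tfrac12+\tfrac{(N-1)\la}{2\La}$ is not the same as for $\Phi^-$.
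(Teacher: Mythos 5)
Your proposal is correct and follows essentially the same route as the paper: translate the sub-solutions $\Phi^\pm$ of Lemma \ref{fundamental solution} by $z$, scale by $A_\pm$ chosen as the reciprocal of $\max\{\Phi^\pm(x-z,t):(x,t)\in\ol\Om\times(0,\infty)\}$ (which, via $|x-z|\ge\de$ on $\ol\Om$, reduces to the one-variable optimization you carry out), and conclude with the comparison principle of Lemma \ref{lem:parabolic comparison}. Your explicit computation of the maximum of $t^{-\al}e^{-c/t}$ matches the constants $A_\pm$ in the statement, so no changes are needed.
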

\begin{proof}
We prove the first formula in \eqref{eq:parabolic barrier from below}. The other formula follows similarly. We apply Lemma \ref{lem:parabolic comparison} to $v^-$ and the function $\Om\times(0,\infty)\ni(x,t)\mapsto A_-\Phi^-(x-z,t)$, obtained from $\Phi^-$ in \eqref{global solutions} after translating by $z$ and multiplying by the positive real number $A_-$ defined by
$$
\frac{1}{A_-}=\max\{\Phi^-(x,t): (x,t)\in\ol\Om\times (0,\infty)\}=\left\{ \frac{\de^2 e}{2 N\La}\right\}^{-\frac{N\La}{2\la}}.
$$
\par
Since $\cM^-$ is translation invariant and positively one-homogeneous, from \eqref{global equations} it follows that $A_-\Phi^-(x-z,t)$ is a sub-solution of the first equation in \eqref{parabolic-Pucci}.
A direct inspection shows that $A_-\Phi^-(x-z,t)$ tends to $0$ as $ t\to 0^+$, for any $x\in \Om$. Also, we have that $A_- \Phi^-(x-z,t) \leq 1$ on $\Ga\times (0,\infty)$ by construction. Then Lemma \ref{lem:parabolic comparison} gives \eqref{eq:parabolic barrier from below}.
\end{proof}

\subsection{Asymptotics in general domains}
\label{ssec:parabolic varadhan formula}


In this subsection, we prove Theorem \ref{th:parabolic-Pucci}. 

\begin{proof}[Proof of Theorem \ref{th:parabolic-Pucci} part (i)]
We prove \eqref{parabolic pucci asymp} only for $v^-$, since the proof for $v^+$ runs similarly with obvious adjustments. For any $x\in\ol\Om$ and $z\in \RR^N\setminus\ol\Om$, from Lemmas \ref{lem:parabolic from above} and  \ref{lem:parabolic from below}, it follows that 
\begin{multline}
\label{eq:ppucci double estimate}
4t\log A_- -2\frac{N\La}{\la} t \log t - \frac{|x-z|^2}{\la} \leq 4 t \log v^-(x,t) \leq \\
 -\frac{d_\Ga(x)^2}{\la} + 4 t \log \left[ \frac{\int_{0}^{\pi} \left(\sin\te\right)^{N-2} d\te}{\int_{0}^{\pi} e^{-\frac{d_\Ga(x)^2}{2\la t}(1-\cos\te)}\left(\sin\te\right)^{N-2} d\te} \right],
\end{multline}
for any $t>0$. Thus, as $t\to 0^+$ we get:
\begin{equation*}
-\frac{|x-z|^2}{\la} \leq 4\liminf_{t \to 0^+} \{ t \log v^-(x,t)\} \leq 4 \limsup_{t\to 0^+} \{t\log v^-(x,t)\}\leq -\frac{d_\Ga(x)^2}{\la}.
\end{equation*}
Formula \eqref{parabolic pucci asymp} then follows by letting $z\to y\in\Ga$, where $y\in\Ga$ is such that $|y-x|=d_\Ga(x)$.
\end{proof}

Recall that, given $\Om\in C^{0,\om}$, the function $\psi_\om$ is defined by \eqref{psi function}.

\begin{proof}[Proof of Theorem \ref{th:parabolic-Pucci} part (ii)]
We prove \eqref{uniform ppucci asymp} only for $v^-$, since for $v^+$ we argue similarly. Let $K$ be a compact subset of $\ol\Om$ and let $d$ be the number already defined in the proof of Theorem \ref{th:elliptic-Pucci}.
\par 
For a given $x\in K$, let $y\in\Ga$ be a point minimizing the distance from $x$ to $\Ga$ and consider the coordinate system in $\RR^{N-1}\times\RR$ such that $y=(0',0)$. In this coordinate system, for $t>0$, we choose  $z_t=(0',t)$. Then, $z_t \in\RR^N\setminus\ol\Om$ when $t\leq d$ is sufficiently small. This implies that $|y-z_t|=t$, $d_\Ga(z_t) \geq \psi_\om(t)$ and that $d_\Ga(x)^2 - |x-z_t|^2 \geq d_\Ga(x)^2 -\left(d_\Ga(x) +t\right)^2 \geq - \left(t+2d\right)t$. 
 \par 
 Hence, from \eqref{eq:ppucci double estimate} it follows that
\begin{multline}
\label{uniform ineq}
2 \frac{N\La}{\la}t \log \left\{ \frac{ \psi_\om(t)^2 e}{2N\La} \right\} -2\frac{N\La}{\la} t \log t - \frac{\left(t + 2 d\right) t}{\la}\leq \\
 4 t \log v^-(x,t) + \frac{d_\Ga(x)^2}{\la} \leq 4 t \log \left[ \frac{\int_{0}^{\pi} \left(\sin\te\right)^{N-2} d\te}{\int_{0}^{\pi} e^{-\frac{d^2}{2\la t}(1-\cos\te)}\left(\sin\te\right)^{N-2} d\te} \right].
\end{multline}
\par
A direct inspection reveals that the left-hand side of \eqref{uniform ineq} vanishes uniformly, as $t\to 0^+$, as $O\left(t \log \psi_\om(t)\right)$. The application of \eqref{eq:asymptotics-to-infinity-I} in Lemma \ref{lem:bessel} with $\si=d^2/(2\La t)$ gives that the right-hand side of \eqref{uniform ineq} vanishes as $O\left(t \log t\right)$, for $t\to 0^+$. Thus, we have obtained \eqref{uniform ppucci asymp}.
\end{proof}

\section{Formulas for $q$-means}
\label{sec:q-mean-formulas}
Before moving on, we recall the definition of $q$-means on balls. Set $1\leq q\leq\infty$. Consider a continuous function $u: \ol{B}\to\RR$ over a ball $B$. The $q$-mean of $u$ on $B$ is the unique minimum point of the function $\RR\ni \mu \mapsto
\Vert u - \mu \Vert_{L^q\left(B\right)}$.
\par
In Theorems \ref{th:qmeans elliptic} and \ref{th:qmeans parabolic} we provide the respective asymptotic profiles as $\ve\to 0^+$ and $t\to 0^+$ of the $q$-means of $u_\pm^\ve$ and $v^\pm\left(\cdot,t\right)$ on balls touching the boundary $\Ga$. These formulas extend those obtained by Berti and Magnanini in \cite[Theorem 3.5]{BM-AA} and \cite[Theorem 3.5]{BM-JMPA} concerning the game-theoretic $p$-laplacian. Since the proofs of Theorems \ref{th:qmeans elliptic} and \ref{th:qmeans parabolic} substantially mimic those in \cite{BM-AA} and \cite{BM-JMPA}, respectively, we only summarize here the main steps.

Throughout this section, we assume that $\Om$ is of class $C^2$ and that $x\in\Om$ is such that $\ol{B_R(x)}\cap\left(\RR^N\setminus\Om\right)=\{z_x\}$, for some $R>0$ and $z_x\in\Ga$ such that $\ka_j(z_x) < \frac1{R}$ for $j=1,\dots,N-1$. Also, we set
$$
\Pi_0(z_x)=\prod_{j=1}^{N-1}\left[1-Rk_j(z_x)\right].
$$


%

\begin{thm}
  \label{th:qmeans elliptic}
Let $u_\pm^\ve$ be the solution of \eqref{elliptic-Pucci}. For $1<q\leq\infty$, let $\mu^\pm_{q,\ve}(x)$ be the $q$-mean of $u_\pm^\ve$ on $B_R(x)$.
\par 
Then, for any $1<q<\infty$, it holds:
\begin{equation}
  \label{q-mean elliptic balls}
  \begin{aligned}
\lim_{\ve\to 0^+}
\left(\frac{R}{\ve}\right)^{\frac{N+1}{2(q-1)}}\mu^-_{q,\ve}(x)=
c_{N,q}\,\left\{ \frac{\Pi_0(z_x)}{\la^{\frac{N+1}{2}}}\right\}^{-\frac{1}{2(q-1)}},\\
  \lim_{\ve\to 0^+}
\left(\frac{R}{\ve}\right)^{\frac{N+1}{2(q-1)}}\mu^+_{q,\ve}(x)=
c_{N,q}\,\left\{ \frac{\Pi_0(z_x)}{\La^{\frac{N+1}{2}}}\right\}^{-\frac{1}{2(q-1)}},
\end{aligned}
\end{equation}
where
$$
c_{N,q}= \left[\frac{2^{-\frac{N+1}{2}}N!}{(q-1)^{\frac{N+1}{2}}\Ga\left(\frac{N+1}{2}\right)}\right]^\frac1{q-1}.
$$
(Here, $\Ga\left(\frac{N+1}{2}\right)$ is the {\it Euler's gamma function} evaluated at $\frac{N+1}{2}$.)
\par
In the case $q=\infty$, we have:
$$
\lim_{\ve \to 0^+} \mu^\pm_{\infty,\ve}(x) = \frac1{2}.
$$
\end{thm}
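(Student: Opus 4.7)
The plan is to mimic the scheme used by the authors for the game-theoretic $p$-laplacian in \cite[Theorem 3.5]{BM-AA}, with the Bessel-type solutions of Lemmas \ref{lem:solution ball}, \ref{lem:exterior solution} and \ref{lem:bessel} playing the role of the explicit radial profiles used there. I treat $1<q<\infty$ first and dispose of $q=\infty$ at the end by a soft argument.

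For $1<q<\infty$, the $q$-mean $\mu=\mu^\pm_{q,\ve}(x)$ is characterized by the Euler--Lagrange equation
$$
\int_{B_R(x)} |u^\ve_\pm-\mu|^{q-2}(u^\ve_\pm-\mu)\,dy = 0.
$$
By Theorem \ref{th:elliptic-Pucci}(i) combined with Lemma \ref{lem:barrier from above}, $u^\ve_\pm$ decays exponentially on compact subsets of $\Om$; a barrier argument then shows that $\mu\to 0^+$ and that the superlevel set $\{u^\ve_\pm>\mu\}$ shrinks to $\{z_x\}$ as $\ve\to 0^+$. Splitting the above integral at the level $\mu$ and exploiting $\mu-u^\ve_\pm=\mu(1+o(1))$ on the dominant region $\{u^\ve_\pm<\mu\}$ reduces the problem to the asymptotic expansion
$$
\mu^{q-1}\,|B_R(x)|\,(1+o(1)) = \int_{B_R(x)} u^\ve_\pm(y)^{q-1}\,dy\,(1+o(1)).
$$

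To handle the right-hand side, one first upgrades Theorem \ref{th:elliptic-Pucci} into a sharp pointwise asymptotic for $u^\ve_\pm$ in a shrinking neighborhood of $z_x$. Combining Lemma \ref{lem:barrier from above} (interior ball at $x$) with Lemma \ref{lem:barrier from below} applied at $z=z_x+\eta\,\nu$ (with $\nu$ the outward unit normal to $\Ga$ at $z_x$ and $\eta\to 0^+$), and inserting the precise Bessel expansions \eqref{eq:asymptotics-to-infinity-I}--\eqref{eq:asymptotics-to-infinity}, yields a two-sided sandwich
$$
u^\ve_\pm(y) = e^{-d_\Ga(y)/(\ve\sqrt{\la^\pm})}\,(1+o(1)), \qquad y\to z_x,
$$
where $\la^-:=\la$ and $\la^+:=\La$. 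The Laplace expansion of the integral is then carried out in adapted coordinates $(y',y_N)$ centered at $z_x$, with $y_N$ along the inward normal to $\Ga$: locally $\Ga$ reads as $y_N=\tfrac12\sum_j\ka_j(z_x)\,y_j^2+O(|y'|^3)$ and $\pa B_R(x)$ as $y_N=|y'|^2/(2R)+O(|y'|^4)$. The substitution $t=d_\Ga(y)$, followed by an elementary Gaussian integration in $y'$, extracts the factor $\Pi_0(z_x)^{-1/2}$ from the determinant $\prod_j (1-R\ka_j(z_x))^{-1/2}$, giving
$$
\int_{B_R(x)} u^\ve_\pm(y)^{q-1}\,dy \sim (2\pi R)^{(N-1)/2}\,\Pi_0(z_x)^{-1/2}\,\left(\frac{\ve\sqrt{\la^\pm}}{q-1}\right)^{\!(N+1)/2}.
$$

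Inserting this into the reduction and taking $(q-1)$-th roots produces \eqref{q-mean elliptic balls}, after identifying the prefactor with $c_{N,q}$ through the Legendre duplication identity $\Ga((N+1)/2)\,\Ga(N/2+1) = 2^{-N}\sqrt{\pi}\,N!$. For $q=\infty$, the $\infty$-mean reduces to $\tfrac12\bigl(\sup_{B_R(x)}u^\ve_\pm + \inf_{B_R(x)}u^\ve_\pm\bigr)$: the supremum equals $1$ because $u^\ve_\pm$ is continuous up to $\Ga$ with $u^\ve_\pm(z_x)=1$, while the infimum tends to $0$ by Lemma \ref{lem:barrier from above}. The main obstacle is the Laplace step: Theorem \ref{th:elliptic-Pucci} only yields the logarithmic control $\ve\log u^\ve_\pm + d_\Ga/\sqrt{\la^\pm}=O(\ve\log\ve)$, which is insufficient to pin down leading-order coefficients. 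Sharpening this into a genuine pointwise expansion by squeezing with the explicit Bessel-type solutions of Lemmas \ref{lem:solution ball}--\ref{lem:exterior solution}, together with the geometric bookkeeping for the tangency at $z_x$, is the technical core; it proceeds along the lines of \cite[Section 3]{BM-AA}.
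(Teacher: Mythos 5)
Your overall architecture (Euler--Lagrange characterization of the $q$-mean, reduction to $\mu^{q-1}\,|B_R(x)|\sim\int_{B_R(x)}u^{q-1}$, Laplace expansion near $z_x$ producing $\Pi_0(z_x)^{-1/2}$, duplication formula for the constant, soft argument for $q=\infty$) is sound and close in spirit to what the paper does via \cite[Theorem 3.5]{BM-AA}; your final constant is indeed consistent with $c_{N,q}$. The genuine gap is in the step you yourself flag as the core: the claimed uniform sandwich $u^\ve_\pm(y)=e^{-d_\Ga(y)/(\ve\sqrt{\la^\pm})}(1+o(1))$ cannot be obtained from the two barriers you invoke. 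Lemma \ref{lem:barrier from above} compares $u^\ve_\pm(y)$ with the solution in the ball $B_{d_\Ga(y)}(y)$, and by \eqref{eq:asymptotics-to-infinity-I} this yields $u^\ve_-(y)\le C\,(d_\Ga(y)/\ve)^{\frac{N-1}{2}}e^{-d_\Ga(y)/(\ve\sqrt{\la})}$: the algebraic prefactor is not $1+o(1)$ on the scale $d_\Ga(y)\sim\ve$ that dominates the integral, and it changes the limiting constant (it leaves the power of $\ve$ intact but replaces $\si^{\frac{N-1}{2}}$ by $\si^{N-1}$ in the one-dimensional Laplace integral). If instead by ``interior ball at $x$'' you mean comparison with the explicit solution in $B_R(x)$ itself, the resulting bound is expressed through $R-|y-x|=\dist(y,\pa B_R(x))$, not $d_\Ga(y)$; at the Gaussian scale $|y-z_x|\sim\sqrt{\ve R}$ the two differ by $\tfrac12\sum_j(1/R-\ka_j)y_j^2=O(\ve)$, i.e.\ a factor $e^{O(1)}$, which again corrupts exactly the constant $c_{N,q}\Pi_0(z_x)^{-1/2}$ you are trying to identify. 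The same defect affects your lower bound: Lemma \ref{lem:barrier from below} with a single exterior point $z=z_x+\eta\nu$ gives an exponent $|y-z|-d_\Ga(z)\approx d_\Ga(y)+\tfrac12\sum_j\ka_j y_j^2+|y'|^2/(2\eta)$, so on the relevant region $|y'|^2\sim\ve$ the lower barrier is off by a factor $e^{-O(1)}$, not $1+o(1)$; it is sharp only along the normal segment through $z_x$.

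What is actually needed (and what the paper uses) is the improved, $d_\Ga$-dependent two-sided barrier of \cite[Lemma 3.1]{BM-AA}: for each $y$ in $B_R(x)$ one compares with the explicit radial solutions of Lemmas \ref{lem:solution ball} and \ref{lem:exterior solution} in the interior and exterior balls of \emph{uniform} radii $r_i$, $r_e$ tangent to $\Ga$ at the projection of $y$ (this is where the $C^2$ hypothesis enters), obtaining $f^1_\ve\bigl(d_\Ga/(\ve\sqrt{\la})\bigr)\le u^\ve_-\le f^2_\ve\bigl(d_\Ga/(\ve\sqrt{\la})\bigr)$ with multiplicative errors $\bigl(1\pm O(d_\Ga)\bigr)^{\frac{N-1}{2}}=1+o(1)$ uniformly on a collar $d_\Ga\lesssim\ve\log(1/\ve)$. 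With that sandwich your Laplace step (or, as in the paper, the monotonicity of $q$-means plus the co-area formula and the geometric lemma \cite[Lemma 2.1]{MS-IUMJ}, which is precisely your adapted-coordinates Gaussian computation) goes through. So: replace the squeeze you describe by the projection-point tangent-ball barriers, and also supply the routine but necessary tail estimates in your Euler--Lagrange reduction (namely $\int_{\{u\le\mu\}}u^{q-1}=o(\ve^{\frac{N+1}{2}})$ and the control of $u^{q-1}-(u-\mu)^{q-1}$ on $\{u>\mu\}$, using that $|\{d_\Ga<s\}\cap B_R(x)|=O(s^{\frac{N+1}{2}})$); the $q=\infty$ case is fine as you wrote it.
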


\begin{proof}
As usual, we only consider the case  with the superscript ``$-$''. As in \cite[Lemma 3.1]{BM-AA}, since $\Om$ is of class $C^2$ and $B_R(x)$ is bounded, we are able to improve the barriers given in Lemmas \ref{lem:solution ball} and \ref{lem:exterior solution}. The restriction of $u^\ve_-$ to $B_R(x)$ lies between two functions that spatially depend only on the distance to $\Ga$:
$$
f_\ve^1\left(\frac{d_\Ga}{\sqrt{\ve^2\la}}\right) \leq u^\ve_- \leq f_\ve^2\left(\frac{d_\Ga}{\sqrt{\ve^2\la}}\right)\ \mbox{ in } \ B_R(x).
$$
Here, for $\si\geq 0$, $f_\ve^1$ and $f_\ve^2$ (which correspond to $U^\ve$ and $V^\ve$ in the first part of \cite[Lemma 3.1]{BM-AA}) are defined by:
$$
f^1_\ve(\si)=\frac{\int_{0}^{\infty}e^{-\si\cosh\te}\,\left\{e^{-\frac{r_e\cosh\te}{\ve\sqrt{\la}}}\left(\sinh\te\right)^{-1+(N-1)\La/\la}\,d\te\right\}}{\int_{0}^{\infty}e^{-\frac{r_e\cosh\te}{\ve\sqrt{\la}}}\left(\sinh\te\right)^{-1+(N-1)\La/\la}\,d\te}
$$
and
$$
f^2_\ve(\si)=\frac{\int_{0}^{\pi}e^{-\si\cos\te}\,\left\{e^{\frac{r_i\cos\te}{\ve\sqrt{\la}}}\left(\sin\te\right)^{N-2}\,d\te\right\}}{\int_{0}^{\pi}e^{\frac{r_i\cos\te}{\ve\sqrt{\la}}}\left(\sin\te\right)^{N-2}\,d\te},
$$
where the positive real numbers $r_i$ and $r_e$ are the radii such that the projection of $\ol{B_R(x)}$ on $\Ga$ satisfies the interior and the exterior sphere condition, respectively.
\par 
With these barriers in mind, we just proceed as in \cite[Theorem 3.5]{BM-AA}. Since the $q$-means are monotonic with respect to the pointwise order between functions, formulas \eqref{q-mean elliptic balls} and the one in the case $q=\infty$ result after computing them for both the functions $f_\ve^1\left(d_\Ga/\sqrt{\ve^2\la}\right)$ and $f_\ve^2\left(d_\Ga/\sqrt{\ve^2\la}\right)$. In the case $1<q<\infty$, the desired asymptotics are a consequence of applications of the co-area formula and the geometrical lemma \cite[Lemma 2.1]{MS-IUMJ}, which generates the term $\Pi_0(z_x)$. The case $q=\infty$ plainly follows from the fact that the $\infty$-mean of a function is the arithmetic mean of its supremum and its infimum.
\end{proof}



\begin{thm}
\label{th:qmeans parabolic}
Suppose that $v^\pm$ is the solution of \eqref{parabolic-Pucci}. For $1<q \leq \infty$, let $\mu^\pm_q(x,t)$ be the $q$-mean of $v^\pm\left(\cdot, t\right)$ on $B_R(x)$.
\par 
Then, for $1<q<\infty$, it holds that
\begin{equation}
\label{eq:parabolic balls qmean}
\begin{aligned}
\lim_{t\to 0^+} \left(\frac{R^2}{t}\right)^\frac{N+1}{4(q-1)}\!\!\!\!\mu^-_q(x,t)= 
C_{N,q}\, \left\{ \frac{\Pi_0(z_x)}{\la^{\frac{N+1}{2}}}\right\}^{-\frac{1}{2(q-1)}}, \\
\lim_{t\to 0^+} \left(\frac{R^2}{t}\right)^\frac{N+1}{4(q-1)}\!\!\!\!\mu^+_q(x,t)=
C_{N,q}\, \left\{ \frac{\Pi_0(z_x)}{\La^{\frac{N+1}{2}}}\right\}^{-\frac{1}{2(q-1)}},
\end{aligned}
\end{equation}
where 
$$
C_{N,q} = \left[\frac{N!\,\int_0^\infty \Erfc(\si)^{q-1} \si^\frac{N-1}{2} d\si}{\,\Ga \left( \frac{N+1}{2} \right)^2}\right]^\frac1{q-1}.
$$
Here, $\Erfc$ is the {\it complementary error function}, defined by $$
\Erfc(\si)=\frac2{\sqrt{\pi}}\int_{\si}^{\infty}e^{-s^2}\,ds
$$
for $\si\in\RR$.
\par
In the case $q=\infty$, we have:
$$
\lim_{t\to 0^+} \mu_\infty^\pm(x,t) = \frac1{2}.
$$
\end{thm}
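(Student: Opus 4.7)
The plan is to follow the parabolic analog of the strategy used to establish Theorem \ref{th:qmeans elliptic}, imitating \cite[Theorem 3.5]{BM-JMPA}. The two main ingredients are: (i) a local sharpening of the parabolic barriers of Section 4 on the ball $B_R(x)$ into one-dimensional functions of the Euclidean distance $d_\Ga$; (ii) a coarea computation of the $q$-means of these one-dimensional profiles that invokes the geometric lemma \cite[Lemma 2.1]{MS-IUMJ} to generate the factor $\Pi_0(z_x)$.

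For step (i), we exploit the $C^2$ regularity of $\Ga$ near the touching point $z_x$ to find radii $r_i\le R\le r_e$ such that the orthogonal projection of $\ol{B_R(x)}$ on $\Ga$ satisfies interior and exterior uniform sphere conditions. Centering Lemma \ref{lem:parabolic from above} at shifts of $x$ toward the osculating interior sphere and Lemma \ref{lem:parabolic from below} at the center of the osculating exterior sphere, one produces two functions $g^1_t,g^2_t:[0,\infty)\to(0,\infty)$ such that
$$
g^1_t\!\left(\frac{d_\Ga}{\sqrt{t}}\right)\le v^-(\cdot,t)\le g^2_t\!\left(\frac{d_\Ga}{\sqrt{t}}\right)\ \mbox{ on } \ \ol{B_R(x)},
$$
and analogously for $v^+$ with $\la$ replaced by $\La$. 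Thanks to the Gaussian form of the subsolutions in Lemma \ref{fundamental solution} and of the upper bound derived from $u^\ve_-$ in the proof of Lemma \ref{lem:parabolic from above}, both profiles $g^j_t(\si)$ admit the limit $\Erfc\!\bigl(\si/(2\sqrt{\la})\bigr)$ as $t\to 0^+$ (and $\Erfc\!\bigl(\si/(2\sqrt{\La})\bigr)$ for $v^+$), up to multiplicative errors of order $1+o(1)$.

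With the sandwiching in hand, the monotonicity of $q$-means with respect to the pointwise order reduces \eqref{eq:parabolic balls qmean} to computing the $q$-means on $B_R(x)$ of the two radial functions $g^j_t(d_\Ga/\sqrt{t})$ for $1<q<\infty$. Applying the coarea formula to the stationarity condition for the $q$-mean collapses the integral over $B_R(x)$ into a one-dimensional integral in $s=d_\Ga$ weighted by the $(N-1)$-dimensional area of $\{d_\Ga=s\}\cap B_R(x)$. As in the elliptic case, \cite[Lemma 2.1]{MS-IUMJ} expands this weight near $z_x$ to leading order proportional to $s^{(N-1)/2}\,\Pi_0(z_x)^{-1/2}$. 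The subsequent rescalings $s=\sqrt{t}\,\si$ and $\si\mapsto 2\sqrt{\la}\,\si$ (resp.\ $2\sqrt{\La}\,\si$) produce the scaling factor $(R^2/t)^{-(N+1)/(4(q-1))}$, the multiplicative $\la^{(N+1)/4}$ (resp.\ $\La^{(N+1)/4}$) raised to the proper power, and the universal integral $\int_0^\infty\Erfc(\si)^{q-1}\,\si^{(N-1)/2}\,d\si$ that enters the definition of $C_{N,q}$.

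The case $q=\infty$ is immediate: the $\infty$-mean of a continuous function on a compact set is the arithmetic average of its supremum and infimum, and since both barriers $g^j_t$ satisfy $g^j_t(0)\to 1$ at $z_x$ and $g^j_t(\si)\to 0$ for $\si>0$ as $t\to 0^+$, one concludes $\mu_\infty^\pm(x,t)\to 1/2$. The main obstacle is step (i): checking that the refined barriers $g^j_t$ really sandwich $v^\pm$ on $\ol{B_R(x)}$ with the correct Gaussian tail and multiplicative constants tending to $1$, and uniformly controlling the remainder on the effective range $\si\in[0,2R/\sqrt{t}]$, so that the asymptotic integrals converge to the $\Erfc$-integral defining $C_{N,q}$. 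Once this is settled, the remainder of the argument runs in parallel with the elliptic one and with \cite[Theorem 3.5]{BM-JMPA}.
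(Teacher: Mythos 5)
Your overall architecture matches the paper's: sandwich $v^\pm$ between two profiles depending only on $d_\Ga$, use monotonicity of $q$-means, then the coarea formula and the geometric lemma of \cite{MS-IUMJ} to produce $\Pi_0(z_x)^{-1/2}$ and the $\Erfc$-integral, with the $q=\infty$ case read off from the definition of $\infty$-mean. The genuine gap is in your step (i), and it is not just a technical loose end: the barriers of Lemmas \ref{lem:parabolic from above} and \ref{lem:parabolic from below}, even recentered at osculating interior/exterior spheres, cannot deliver profiles converging to $\Erfc\bigl(\si/(2\sqrt{\la})\bigr)$ with multiplicative error $1+o(1)$ in the regime that actually determines the limits \eqref{eq:parabolic balls qmean}, namely $d_\Ga(y)\sim\sqrt{t}$ (this is where $v^\pm$ is of order one and where all the mass of the $q$-mean asymptotics concentrates). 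The lower bound \eqref{eq:parabolic barrier from below} is a single Gaussian $A_-t^{-N\La/(2\la)}e^{-|y-z|^2/(4\la t)}$ centered at an exterior point $z$ with $|y-z|\ge r_e$ for the relevant $y$, hence it is exponentially small (of size $e^{-r_e^2/(4\la t)}$ up to powers of $t$) precisely where $\Erfc\bigl(d_\Ga/(2\sqrt{\la t})\bigr)$ is of order one; letting $z$ approach $z_x$ does not help, since then $\de=d_\Ga(z)\to 0$ degrades $A_-$ and the bound stays bounded away from the $\Erfc$ profile. The upper bound \eqref{eq:parabolic barrier from above}, written at $y$ with $d_\Ga(y)=\si\sqrt t$, gives $v^-\le e^{-\si^2/(4\la)}\,\bigl[\int_0^\pi(\sin\te)^{N-2}d\te\big/\int_0^\pi e^{-\frac{\si^2}{2\la}(1-\cos\te)}(\sin\te)^{N-2}d\te\bigr]$, which is a fixed function of $\si$ different from (and larger than) $\Erfc\bigl(\si/(2\sqrt{\la})\bigr)$; feeding it into the coarea computation would produce a wrong constant in place of $C_{N,q}$. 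For the same reason your $q=\infty$ argument is incomplete: these crude lower barriers do not even show $\sup_{B_R(x)}v^-(\cdot,t)\to 1$.

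What is needed, and what the paper uses, is a sharper two-sided bound with an \emph{additive} shift inside the error function,
\begin{equation*}
\Erfc\left(\frac{d_\Ga(y)}{2\sqrt{\la t}}+\eta(t)\right)\ \le\ v^-(y,t)\ \le\ \Erfc\left(\frac{d_\Ga(y)}{2\sqrt{\la t}}-\eta(t)\right),\qquad \eta(t)=O\bigl(\sqrt{t}\,\log t\bigr),
\end{equation*}
valid on $\ol{B_R(x)}$ (and its analogue with $\La$ for $v^+$). This is obtained not from Lemmas \ref{lem:parabolic from above}--\ref{lem:parabolic from below} but by the manipulations of \cite[Corollary 2.12 and Theorem 3.5]{BM-JMPA}, starting from the uniform estimate \eqref{uniform ppucci asymp} and exploiting the $C^2$ regularity (uniform interior and exterior tangent balls) together with the exact radial comparison solutions; this sandwich is accurate up to $o(1)$ in the argument of $\Erfc$, uniformly for $d_\Ga(y)\sim\sqrt t$, and is exactly what pins down both the scaling $(R^2/t)^{(N+1)/(4(q-1))}$ and the constant $C_{N,q}$ after the coarea/rescaling step you describe. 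So your plan goes through only once you replace your step (i) by (or supply an independent proof of) this $\Erfc$-sandwich; as written, the construction you propose for the barriers would fail to yield \eqref{eq:parabolic balls qmean} with the stated constant.
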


\begin{proof}
We follow the proof of \cite[Theorem 3.5]{BM-JMPA}. After some manipulations from \eqref{uniform ppucci asymp} as in \cite[Corollary 2.12 and Theorem 3.5]{BM-JMPA}, since $\Om$ is of class $C^2$ and $\ol{B_R(x)}\subset \ol\Om$ is compact we argue that, for any $(y,t)\in\ol{B_R(x)}\times (0,\infty)$,
\begin{equation*}
\label{parabolic barriers}
\Erfc\left(\frac{d_\Ga(y)}{2\sqrt{\la \,t}}+\eta(t)\right) \leq v^-(y,t) \leq \Erfc\left(\frac{d_\Ga(y)}{2\sqrt{\la\, t}}-\eta(t)\right),
\end{equation*}
where $\eta(t):(0,\infty)\to(0,\infty)$ is such that $\eta(t) = O\left(\sqrt{t}\log t\right)$, as $t\to 0^+$.
\par
As in the elliptic case (Theorem \ref{th:qmeans elliptic}), the statement follows from the monotonicity of the $q$-means with respect to the pointwise order between functions and the computation of the desired asymptotics for both barriers. In particular, the case $1<q<\infty$ descends from applications of the co-area formula and the geometrical lemma in \cite[Theorem 2.1]{MS-IUMJ} while the case $q=\infty$ is given directly by the definition of $\infty$-mean.
\par
Formulas for $\mu_q^+(x,t)$ are treated similarly.

\section*{Acknowledgments}
This paper was partially supported by the Gruppo Nazionale di Analisi Matematica, Probabilit\`a e Applicazioni (GNAMPA) of the Istituto Nazionale di Alta Matematica (INdAM). D.B. was supported by the Programa de Excelencia Severo Ochoa SEV-2015-0554 of the Ministerio de Economia y Competitividad.

\end{proof}



\bibliographystyle{abbrv2}
\bibliography{mybib}{}

 \end{document}